\newcommand*\circled[1]{\tikz[baseline=(char.base)]{
            \node[shape=circle,draw,inner sep=2pt] (char) {#1};}}
\newcommand{\R}{\mathbb{R}}
\newcommand{\N}{\mathbb{N}}
\newcommand{\Z}{\mathbb{Z}}
\newcommand{\conv}{\operatorname{conv}}
\newcommand{\F}{\mathscr{F}}
\newcommand{\NP}{$\mathcal{NP}$}
\newcommand{\transp}{\mathsf T}
\newenvironment{cpf}
{\begin{trivlist} \item[] {\em Proof of claim }}
{$\hfill\diamond$ \end{trivlist}}
\begin{document}

\title{On the Complexity of Separating Cutting Planes for the Knapsack Polytope}

\author{Alberto Del Pia \and Jeff Linderoth \and Haoran Zhu\footnote{Corresponding author: Haoran Zhu, hzhu94@wisc.edu}
}

\authorrunning{A. Del Pia et al.} 

\institute{Haoran Zhu \at
              Department of Industrial and Systems Engineering, University of Wisconsin-Madison, \\
              Madison, WI, 53705, USA\\
              hzhu94@wisc.edu           
}

\institute{Department of Industrial and Systems Engineering \& Wisconsin Institute for Discovery, University of Wisconsin-Madison, Madison, USA, \email{delpia@wisc.edu}
\email{linderoth@wisc.edu}
\email{hzhu94@wisc.edu}}

\date{Received: date / Accepted: date}

\maketitle

\begin{abstract} 
We close three open problems on the separation complexity of valid inequalities for the knapsack polytope. 
Specifically, we establish that the separation problems for extended cover inequalities, $(1,k)$-configuration inequalities, and weight inequalities are all \NP-complete.  
We also show that, when the number of constraints of the LP relaxation is constant and its optimal solution is an extreme point, then the separation problems of both extended cover inequalities and weight inequalities can be solved in polynomial time. 
Moreover, we provide a natural generalization of $(1,k)$-configuration inequality which is easier to separate and contains the original $(1,k)$-configuration inequality as a strict sub-family.

\keywords{Knapsack polytope \and Separation problem \and Complexity theory}

\end{abstract}

\section{Introduction}

The \emph{multi-dimensional knapsack problem} is the integer programming (IP) problem
\begin{equation}
\label{eq:knapsack}
\max\{c^\transp x :  Ax \leq d, \ x \in \{0,1\}^n\},
\end{equation}
where $A \in \Z^{m \times n}_+$, $c \in \Z^n_+$, and $d \in \Z^m_+$.
When the constraint matrix $A$ only has one row $a$ and the right-hand side vector is a positive integer $b$, problem~\eqref{eq:knapsack} is referred to as \emph{knapsack problem}, and the convex hull of the associated feasible region, $\conv(\{x \in \{0,1\}^n : a^\transp x \leq b\})$, is referred to as the \emph{knapsack polytope}.

%
The multi-dimensional knapsack problem is a fundamental problem in discrete optimization, and valid inequalities for the feasible region have been widely studied, see, e.g., \cite{kellerer2004multidimensional,puchinger2010multidimensional,10.1007/978-3-030-73879-2_14} and the modern survey \cite{hojny2019knapsack}.
In this paper, we study the complexity of the separation problem for well-known families of valid inequalities for \eqref{eq:knapsack}.

A standard and computationally useful way for generating cuts for \eqref{eq:knapsack} is to construct cuts for the knapsack polytope defined by its individual constraints. 
Suppose $a$ is a row of the constraint matrix $A$, and let $b$ be the corresponding coordinate of the right-hand side $d$.  We denote the associated knapsack polytope by $K: = \conv(\{x \in \{0,1\}^n : a^\transp x \leq b\})$. 

Many families of valid inequalities for $K$ are based on the notion of a \emph{cover}, which is a subset $C$ of $\{1,2,\ldots, n\}$ such that $\sum_{i \in C} a_i > b$.
Given a cover $C$, the inequality
\begin{equation*}
\sum_{i \in C} x_i \leq |C| - 1
\end{equation*}
is valid for $K$, and is called a \emph{cover inequality} (CI).  Cover inequalities can often be strengthened through a process called lifting, and the resulting inequalities are called \emph{lifted cover inequalities} (LCIs) \cite{balas1978facets,MR1656936,letchford2019lifted,wolsey1975faces,padberg1975note}.
Balas \cite{balas1975facets} gave one family of LCIs  known as \emph{extended cover inequality} (ECI), which have the form
\begin{equation*}
\sum_{j \notin C: a_j \geq \max_{i \in C} a_i} x_j +  \sum_{i \in C} x_i \leq |C| - 1.
\end{equation*}
A \emph{minimal cover} is a cover $C$ such that $\sum_{i \in C \setminus \{j\}} a_i \leq b$ for any $j \in C$.
A set $N \cup \{t\}$ with $N \subsetneq \{1, \ldots, n\}$ and $t \notin N$ is called a \emph{$(1,k)$-configuration} for $k \in \{2, \ldots, |N|\}$ if $\sum_{i \in N} a_i \leq b$ and $Q \cup \{t\}$ is a minimal cover for every $Q \subseteq N$ with $|Q| = k$. Padberg \cite{padberg19801} showed that for any $(1,k)$-configuration $N \cup \{t\}$, the inequality
\begin{equation*}
(|S| - k + 1) x_t + \sum_{i \in S} x_i \leq |S|
\end{equation*}
is valid for $K$ for every $S \subseteq N$ with $|S| \geq k$.  This inequality is called a \emph{$(1,k)$-configuration inequality}. 

Other valid inequalities for the knapsack polytope $K$ arise from the concept of a pack. 
For the knapsack polytope $K$, a set $P \subseteq \{1, \ldots, n\}$ is a \emph{pack} if $\sum_{i \in P} a_i \leq b$.
Given a pack $P$, the corresponding \emph{pack inequality} $\sum_{i \in P}a_i x_i \leq \sum_{i \in P} a_i$ is trivially valid for $K$, as it is implied by the upper bound constraints $x_i \leq 1$. 
However, pack inequalities can be lifted in several different ways to obtain more interesting \emph{lifted pack inequalities (LPIs)} \cite{atamturk2005cover}. 
Weismantel \cite{weismantel19970} derived the weight-inequalities, which are LPIs.  To define the weight inequalities, let $r(P): = b - \sum_{i \in P} a_i$ be the \emph{residual capacity} of the pack $P$. 
The indices $j \notin P$ with $a_j > r(P)$ are lifted to obtain the \emph{weight inequality (WI)}:
\begin{equation*}
\sum_{i \in P} a_i x_i + \sum_{j \notin P} \max\{a_j-r(P),0\} x_j \leq \sum_{i \in P} a_i.
\end{equation*}


Consider the \emph{linear programming (LP) relaxation} of \eqref{eq:knapsack}:
\begin{equation}
\label{eq:knapsack_relaxation}
\max\{c^\transp x :  Ax \leq d, \ x \in [0,1]^n\}.
\end{equation}
For a given family $\F$ of valid inequalities for \eqref{eq:knapsack}, the associated \emph{separation problem} is defined as follows: ``Let $x^*$ be a feasible solution to \eqref{eq:knapsack_relaxation}, does there exist an inequality in $\F$ that is violated by $x^*$? 
If so, return one such inequality from $\F$.''
In this paper, we are mainly interested in the weaker decision version of the separation problem where we do not have to return a separating inequality, even if it does exist, and we assume that $x^*$ is an optimal solution to \eqref{eq:knapsack_relaxation}.
In fact, the separation of an optimal solution is not harder than the separation of a general feasible solution, and from a computational point of view, $x^*$ almost always corresponds to an optimal solution to some linear relaxation.

The separation problem for several families of valid inequalities for the knapsack polytope has been shown to be \NP-complete, including 
CIs \cite{klabjan1998complexity},
and LCIs \cite{MR1688131}.
On the other hand, the complexity of the separation problem for extended cover inequalities, $(1,k)$-configuration inequalities, and weight inequalities are, to the best of our knowledge, unknown.
Kaparis and Letchford stated that the separation problem seems likely to be \NP-hard for ECIs in \cite{kaparis2010separation}.
It was conjectured explicitly in \cite{ferreira1996solving} that the separation problem for $(1,k)$-configuration inequalities is \NP-hard. 
Moreover, the complexity of the separation problem for WIs is also open, as mentioned in \cite{hojny2019knapsack}. 
In this paper, we provide positive answers to all these conjectures. 
Namely, we show that the separation problems for ECIs, for $(1,k)$-configuration inequalities, and for WIs are all \NP-complete. 
The first two results are proven via a reduction from the separation problem for CIs, and the separation complexity for WIs is given via a reduction from the Subset Sum Problem (SSP).
 
Along with these \NP-hardness results, we also present some positive results for the separation problems of those cutting-planes. 
Specifically, we show that when the number of constraints of the LP relaxation~\eqref{eq:knapsack_relaxation} is constant, and the optimal solution $x^*$ is an extreme point, then the separation problems for ECIs, $(1,k)$-configuration inequalities, and WIs, are all polynomial-time solvable. See Corollary~\ref{cor: 1}, Corollary~\ref{cor: config}, and Corollary~\ref{cor: 2}. 
 
We remark that several heuristics and exact separation algorithms are present in the literature for these families of cuts.  Both Gabrel and Minoux~\cite{gabrel2002scheme} and Kaparis and Letchford~\cite{kaparis2010separation} provide an exact separation algorithm for ECIs that runs in pseudo-polynomial time. Ferreira et al.~\cite{ferreira1996solving} presented simple heuristics for the separation problem of $(1,k)$-configuration inequalities.
For the separation problem for WIs, Weismantel~\cite{weismantel19970} proposed an exact algorithm that runs in pseudo-polynomial time. 
Helmberg and Weismantel~\cite{MR1607353} presented a fast separation heuristic for WIs that simply inserts items into the pack $P$ in non-increasing order of $x^*$ value.  
Kaparis and Letchford~\cite{kaparis2010separation} gave two exact algorithms and a heuristic for separating WIs and show how to convert these methods into heuristics for separating LPIs.

Next, we formally define the separation problems considered in this paper.
\smallskip

\noindent \textbf{Problem CI-SP}\\
\textbf{Input: }$(A, d, c) \in (\Z^{m \times n}_+, \Z^m_+, \Z^n_+)$ and an optimal solution $x^*$ to the LP relaxation \eqref{eq:knapsack_relaxation}.\\
\textbf{Question: }Is there a cover $C$ with respect to some row constraint $a^\transp x \leq b$ of \eqref{eq:knapsack_relaxation}, such that $\sum_{i \in C}x^*_i > |C| - 1$?\\

\noindent \textbf{Problem ECI-SP}\\
\textbf{Input: } $(A, d, c) \in (\Z^{m \times n}_+, \Z^m_+, \Z^n_+)$ and an optimal solution $x^*$ to the LP relaxation \eqref{eq:knapsack_relaxation}.\\
\textbf{Question: } Is there a cover $C$ with respect to some row constraint $a^\transp x \leq b$ of \eqref{eq:knapsack_relaxation}, such that $\sum_{j \notin C: a_j \geq \max_{i \in C} a_i} x_j +  \sum_{i \in C} x_i > |C| - 1$? \\


\noindent\textbf{Problem CONFIG-SP} \\
\textbf{Input: } $(A, d, c) \in (\Z^{m \times n}_+, \Z^m_+, \Z^n_+)$ and an optimal solution $x^*$ to the LP relaxation \eqref{eq:knapsack_relaxation}.\\
\textbf{Question: } Is there a $(1,k)$-configuration $N \cup \{t\}$ and a subset $S \subseteq N$ with $|S| \geq k$ with respect to some row constraint $a^\transp x \leq b$ of \eqref{eq:knapsack_relaxation}, such that $(|S| - k + 1) x^*_t + \sum_{i \in S} x^*_i > |S|$? \\


\noindent\textbf{Problem WI-SP} \\
\textbf{Input: } $(A, d, c) \in (\Z^{m \times n}_+, \Z^m_+, \Z^n_+)$ and an optimal solution $x^*$ to the LP relaxation \eqref{eq:knapsack_relaxation}.\\
\textbf{Question: } Is there a pack $P$ with respect to some row constraint $a^\transp x \leq b$ of \eqref{eq:knapsack_relaxation}, such that $\sum_{i \in P} a_i x^*_i + \sum_{j \notin P} \max\{a_j-r(P),0\} x^*_j > \sum_{i \in P} a_i$? \\

For CI-SP, we have the following classic results.
\begin{theorem}[\cite{klabjan1998complexity}]
\label{theo:dsp_hard}
CI-SP is \NP-complete, even if $m=1$, or if $x^*$ is an extreme point.
\end{theorem}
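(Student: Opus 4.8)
The plan is to verify membership in \NP{} directly and then to prove \NP-hardness by a polynomial reduction from an \NP-complete number problem, for which I will use \textsc{Subset Sum}: given positive integers $w_1,\dots,w_n$ and a target $s$, decide whether $\sum_{i\in I}w_i=s$ for some $I\subseteq\{1,\dots,n\}$. Membership in \NP{} is immediate: a ``yes'' instance of CI-SP is certified by a row index together with the set $C$, and both $\sum_{i\in C}a_i>b$ and $\sum_{i\in C}x^*_i>|C|-1$ are checkable in polynomial time.

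For the hardness direction, the departure point is the elementary observation that a cover $C$ violates its cover inequality at $x^*$ if and only if $\sum_{i\in C}(1-x^*_i)<1$. Writing $g_i:=1-x^*_i\ge 0$ for the gaps, CI-SP thus asks whether some set $C$ satisfies $\sum_{i\in C}a_i>b$ together with $\sum_{i\in C}g_i<1$. The core gadget is to force the gaps to be (essentially) proportional to the weights, $g_i=a_i/M$ for a suitable integer $M$; the two requirements then read $b<\sum_{i\in C}a_i<M$, so that choosing $M=b+2$ collapses them to $\sum_{i\in C}a_i=b+1$. Taking $a_i:=w_i$ (discarding any $w_i>s$, which cannot occur in a subset summing to $s$) and $b:=s-1$, a violated cover exists exactly when some subset of the $w_i$ sums to $s$. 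This is the combinatorial heart of the reduction, and it is routine.

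The genuinely delicate part is that $x^*$ is not free: it has to be an optimal solution of the LP relaxation \eqref{eq:knapsack_relaxation} for an appropriate objective $c$, and in the second form of the statement it must moreover be an extreme point. For the first form, I would take $c$ proportional to the single constraint row, making the optimal face of the knapsack LP equal to $\{x\in[0,1]^n:a^\transp x=b\}$, and then place $x^*$ on this face. This face is $(n-1)$-dimensional, so there is ample freedom to pick $x^*$ on it that still encodes \textsc{Subset Sum} in the sense above --- one does not need the exact values $g_i=w_i/(s+1)$, only a gap vector for which $\sum_{i\in C}g_i<1$ holds exactly for the sets $C$ with $\sum_{i\in C}w_i\le s$ --- but guaranteeing that the intersection of the face with this region of gap vectors is nonempty requires padding the \textsc{Subset Sum} instance with a bounded number of auxiliary items that leave the answer unchanged while making the relevant linear equation solvable; one must also check that no auxiliary item can sit in a violated cover (keeping its gap large enough that including it destroys $\sum_{i\in C}g_i<1$). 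For the extreme-point form, one instead appends further rows to $A$ whose purpose is to pin $x^*$ down to a vertex; here the added rows are assigned coefficients and right-hand sides large enough that any of their own covers has weight far above $M$, hence automatically satisfies $\sum_{i\in C}g_i\ge 1$ and yields no violated cover. The main obstacle throughout --- and the technical heart of the proof --- is exactly this: realizing the prescribed fractional $x^*$ as a bona fide LP optimum (respectively, vertex) while keeping the whole auxiliary construction ``cover-inert'', so that the violated covers of the resulting knapsack are in bijection with the subsets of the \textsc{Subset Sum} instance of value $s$. Granting this, the reduction is clearly polynomial-time and correct, which gives \NP-completeness of CI-SP in each of the two restricted settings.
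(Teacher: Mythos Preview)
This theorem is not proved in the paper; it is quoted from \cite{klabjan1998complexity} and invoked only as a black box for the later reductions, so there is no in-paper proof to compare against.

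Your outline captures the standard idea --- rewrite violation as $\sum_{i\in C}(1-x^*_i)<1$ and couple the gaps to the weights so that the cover and violation conditions together force $\sum_{i\in C}a_i$ to a single value --- and this is indeed how the Klabjan--Nemhauser--Tovey argument proceeds. But what you have is a plan, not a proof: the steps you yourself label ``delicate'' are the entire content of the theorem, and you do not carry them out. For $m=1$ you need an explicit $x^*$ on the face $a^\transp x=b$ whose gap vector has the required separation property; the naive choice $g_i=w_i/(s+1)$ does not lie on that face, and ``padding with auxiliary items'' is not innocuous --- an auxiliary item with gap $\ge 1$ (hence $x^*_j=0$) contributes nothing to $a^\transp x^*$ and so cannot help, while one with gap $0$ (hence $x^*_j=1$) can create spurious violated covers, so a working construction must actually be exhibited and both directions of the equivalence verified. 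For the extreme-point version you must specify the added rows, verify that $x^*$ is a vertex of the enlarged polyhedron, name an objective for which it is optimal, and show that the new rows generate no violated CI of their own. Until these are done, you have established membership in \NP\ and identified the right gadget, but not hardness.
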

We will show that the other three problems, ECI-SP, CONFIG-SP, and WI-SP are all \NP-complete.

Clearly, the \NP-hardness of the above problems imply the \NP-hardness of the more general separation problem where $x^*$ is a feasible, and not necessarily optimal, solution to \eqref{eq:knapsack_relaxation}. We should also remark that, since verifying if a given point violates a given inequality can be obviously done in polynomial time with respect to the input size of such point and inequality, the separation problems for these families of cuts are clearly in the class \NP. 
Therefore, when we discuss the separation complexity for those families of cuts, the concepts of \NP-hardness and \NP-completeness coincide.

This paper differs from the preliminary IPCO version~\cite{10.1007/978-3-031-06901-7_13} in the following aspects. 
Firstly, we solve the conjecture presented in Section~3 of \cite{10.1007/978-3-031-06901-7_13}. 
Secondly, we propose a new family of cutting-planes for the knapsack polytope, which can be seen as a natural extension of the $(1,k)$-configuration inequalities.
For this new family of inequalities, we obtain similar separation hardness result and polynomially-solvable cases.

\paragraph{Notation.} For an integer $n$, we set $[n] := \{1, 2, \ldots, n\}$.  
We define $e_n$ as the $n$-dimensional vector of ones, where we often repress the $n$ if the dimension of the vector is clear from the context.
For a vector $x \in \R^n$ and $S \subseteq [n]$, we set $x(S): = \sum_{i \in S} x_i$. 
So for a vector $a \in \R^n$, $a([n]) = a^\transp e = \sum_{i=1}^n a_i$. 
For a set $S \subseteq [n]$ and $k \leq |S|,$ we denote $S^{[k]}$ to be the set of the $k$ largest elements in $S$, and $S_{[k]}$ to be the set of the $k$ smallest elements in $S$. 
Here, when $k=0$, the above two sets are defined as the empty set.


\section{Extended Cover Inequalities}

In this section, we establish the complexity of the separation problem for extended cover inequalities, with a simple reduction from the separation problem for cover inequalities.
In the case where the point to be separated has a small number of fractional components, then extended cover inequality separation can be accomplished in polynomial time.

\begin{theorem}
\label{theo:ECI_reduction}
Problem ECI-SP is \NP-complete, even if $m=1$, or if $x^*$ is an extreme point solution to the LP relaxation~\eqref{eq:knapsack_relaxation}.
\end{theorem}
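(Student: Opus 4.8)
The plan is to reduce from CI-SP, which is \NP-complete by Theorem \ref{theo:dsp_hard}, even with $m=1$ and even when $x^*$ is required to be an extreme point of the LP relaxation. Given an instance of CI-SP with a single knapsack constraint $a^\transp x \leq b$ on variables $x_1,\dots,x_n$ and optimal LP solution $x^*$, I would construct an ECI-SP instance by \emph{padding} the knapsack with extra variables whose coefficients are very large, so that in any cover of the new instance these padding variables can never appear, and so that the extension term in an ECI is forced to be empty. Concretely, introduce one (or a few) new variables with coefficient strictly larger than $b$ — say coefficient $b+1$ — and leave $b$ unchanged; then no such variable can belong to any cover $C$ with $\sum_{i\in C}a_i>b$ together with old variables in a ``minimal'' way, and more importantly, for any cover $C$ consisting only of original variables, every padding variable $j\notin C$ satisfies $a_j = b+1 > \max_{i\in C}a_i$, so it \emph{would} be pulled into the extension. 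That is the wrong direction, so instead I would do the reverse: scale the original coefficients up and introduce padding variables with \emph{tiny} coefficient that are forced to value $1$ at optimality, ensuring $\max_{i\in C}a_i$ for a cover $C$ is large while the only candidate indices $j\notin C$ with $a_j\geq \max_{i\in C}a_i$ are themselves already eligible to be swapped into $C$ without changing feasibility — the cleanest route is to ensure the instance is built so that the extended cover inequality for $C$ coincides with the plain cover inequality for $C$, i.e. the set $\{j\notin C: a_j\geq\max_{i\in C}a_i\}$ is empty for every relevant $C$.

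The key steps, in order: (1) Start from the CI-SP instance $(a,b,x^*)$ with $m=1$. (2) Define a new knapsack instance by appending dummy variables and adjusting coefficients/right-hand side so that (i) the original covers are preserved and in bijection with covers of the new instance that can possibly be violated, and (ii) for every such cover $C$, the extension set $\{j\notin C : a_j \geq \max_{i\in C} a_i\}$ is empty, so the ECI reduces syntactically to the CI. A concrete device: replace each original $a_i$ by $a_i$ and add a single new variable $n+1$ with coefficient $a_{n+1} := \max_i a_i$ and enlarge $b$ by $a_{n+1}$ while forcing $x^*_{n+1}$ close to (but not equal to) $1$ via the objective so it is in the support — this makes $\max$ over any cover $\le a_{n+1}$, and one checks no index outside $C$ has strictly larger coefficient, while ties are handled by a small perturbation making all original coefficients distinct and strictly below $a_{n+1}$ only through the new variable, which by construction is always forced to lie \emph{inside} every cover that is a cover at all. (3) Argue the equivalence: $x^*$ violates some CI in the original instance $\iff$ the transformed $\tilde x^*$ violates some ECI in the new instance. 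The forward direction is immediate since CI $=$ ECI syntactically under the construction; the reverse direction needs: any violated ECI in the new instance, restricted to original variables, yields a violated CI. (4) Handle the two refinements: keeping $m=1$ is automatic since we never add rows; making $\tilde x^*$ an extreme point uses the fact (Theorem \ref{theo:dsp_hard}) that the source instance already has $x^*$ extreme, together with choosing the objective on the dummy variables so that the augmented LP still has a unique, hence extreme, optimum with $\tilde x^*$ extending $x^*$.

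The main obstacle I anticipate is controlling the extension term precisely: the ECI pulls in \emph{all} indices $j\notin C$ with $a_j\ge\max_{i\in C}a_i$, and a careless padding can either create spurious extension terms (making a non-violated CI look like a violated ECI, breaking the reverse implication) or destroy the violation. The construction therefore has to guarantee that $\max_{i\in C}a_i$ is attained in a way that leaves no eligible outside index — the natural fix is to make the coefficients of the original items pairwise distinct (by a tiny perturbation that preserves which subsets are covers, exploiting integrality/a common scaling) and to include in every candidate cover the unique largest-coefficient item, so that any $j\notin C$ automatically has $a_j<\max_{i\in C}a_i$. Verifying that this perturbation preserves the set of covers and the violation status of $x^*$, and that the optimal-solution / extreme-point hypotheses survive, is the delicate bookkeeping; once that is in place, the syntactic identity ECI $\equiv$ CI on the relevant covers closes the reduction, and membership in \NP\ was already noted in the text, giving \NP-completeness.
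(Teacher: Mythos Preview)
Your high-level strategy---reduce from CI-SP by appending a single ``heavy'' variable that must lie in every cover, so that the ECI collapses to the CI---is exactly the paper's approach. The gap is in the concrete choice of the heavy coefficient. You propose $a_{n+1}:=\max_i a_i$ and $b':=b+a_{n+1}$, and then assert that $n{+}1$ is ``always forced to lie inside every cover.'' This is false in general: a subset $C'\subseteq[n]$ is still a cover of the new constraint whenever $a(C')>b+\max_i a_i$, which certainly can happen (e.g.\ take any original instance with $a([n])>b+\max_i a_i$). For such $C'$ the extension set $\{j\notin C': a_j\ge\max_{i\in C'}a_i\}$ need not be empty, and the equivalence ECI\,$\Leftrightarrow$\,CI breaks. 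Your proposed patch---perturb coefficients to be distinct and argue the largest-coefficient item is always in $C$---does not help, because the issue is not ties but the fact that covers avoiding $n{+}1$ can exist at all.

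The clean fix, which is what the paper does, is to take $a_{n+1}:=\sum_{i=1}^n a_i$ and $b':=b+\sum_i a_i$. Then $a([n])=a_{n+1}<b'$, so \emph{no} subset of $[n]$ alone can be a cover; every cover of the new constraint must contain $n{+}1$. Moreover $a_{n+1}$ is automatically the unique maximum coefficient, so for any cover $C'\ni n{+}1$ the extension set is empty and the ECI is literally $y(C')\le|C'|-1$. Setting $y^*_{n+1}=1$ (not ``close to $1$''---exact equality is what you want, and it keeps the arithmetic trivial: $y^*(C')=1+x^*(C)$) and choosing the new objective coefficient large enough (via LP duality) makes $y^*=(x^*,1)$ optimal; it is an extreme point of the augmented polytope whenever $x^*$ is, since the bound $y_{n+1}\le 1$ is tight. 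With this single change your outline goes through without any perturbation or further bookkeeping.
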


\begin{proof}
We transform CI-SP to ECI-SP.  
Let $(A,d,c,x^*) \in (\Z^{m \times n}_+, \Z^m_+,\Z^n_+, [0,1]^n)$ be the input to CI-SP.  
We construct the input to ECI-SP with the property that there is a yes-certificate to CI-SP with input $(A,d,c,x^*)$ if and only if there is a yes-certificate to ECI-SP with input $(A', d', c', y^*) \in (\Z^{m \times (n+1)}_+, \Z^m_+, \Z^{n+1}_+,[0,1]^{n+1})$.

The data for the ECI-SP instance are constructed as follows:
\begin{alignat*}{3}
A'_{ij} = A_{ij} \ \forall i \in [m], \forall j \in [n], &\qquad A'_{i,n+1} = \sum_{j=1}^n A_{ij} \ \forall i \in [m],\\
c'_j = c_j \ \forall j \in [n], &\qquad c'_{n+1} = M,\\
d'_i = d_i + \sum_{j=1}^n A_{ij} \ \forall i \in [m].
\end{alignat*}
The constant $M$ is chosen to be large enough so that if $x^*$ is an optimal solution to the linear program \eqref{eq:knapsack_relaxation}, then $y^* = (x^*,1)$ is an optimal solution to the linear program
\begin{equation}
\label{eq:lp_relaxation_eci}
\max\{(c')^\transp y : A' y \leq d', y \in [0,1]^{n+1}\}.
\end{equation}
It is a consequence of linear programming duality that selecting $M \geq (\pi^*)^\transp Ae$, where $\pi^*$ are optimal dual multipliers for the inequality constraints in \eqref{eq:knapsack_relaxation}, will ensure the optimality of $y^*$. Since there is an optimal solution $\pi^*$ whose encoding length is of polynomial size \cite{Schrijver86}, the encoding size of $M$ is a polynomial function of the input size of CI-SP.

Let $C \subseteq [n]$ be a cover with respect to a row constraint $a^\transp x \leq b$ of $Ax \leq d$
such that the associated CI does not hold at $x^*$, so $x^*(C) > |C| - 1$. 
Then $C': = C \cup \{n+1\}$ is a cover with respect to the constraint $(a^\transp, a^\transp e) \cdot y \leq b + a^\transp e$ within $A' y \leq d'$, and the associated ECI cuts off $y^*$, since $y^*(C') = 1+x^*(C) > |C| = |C'| - 1$.

On the other hand, assume that $C'$ is a cover with respect to some row constraint 
$a'^\transp y = (a^\transp, a^\transp e) \cdot y \leq b + a^\transp e = b'$ within $A' y \leq d'$ such that the associated ECI cuts off $y^*$.
Note that if $n+1 \notin C'$, then $\sum_{j \in C'} a'_j \leq a'([n]) = a^\transp e < b + a^\transp e$, and $C'$ cannot be a cover with respect to that row constraint. Thus, $n+1 \in C'$, and the ECI of $C'$ is just its cover inequality $y(C') \leq |C'| - 1$.
By construction, the set $C := C' \setminus \{n+1\}$ is a cover with respect to the constraint $a^\transp x \leq b$ within $Ax \leq d$.  
The ECI of $C'$ cuts off $y^*$, $y^*(C') = 1 + x^*(C) > |C'|-1 = |C|$, so $x^*(C) > |C| - 1$, and the CI from $C$ cuts off $x^*$.  

We have shown that there is a yes-certificate to CI-SP with input $(A,d,c,x^*)$ if and only if there is a yes-certificate to ECI-SP with input $(A', d', c', y^*)$.
Together with Theorem~\ref{theo:dsp_hard}, this establishes that ECI-SP is \NP-complete, even if $m=1$, or if $x^*$ is an extreme point to the LP relaxation~\eqref{eq:knapsack_relaxation}.  
To conclude the proof, it suffices to realize that the input $y^* = (x^*,1)$ for ECI-SP will be an extreme point of $\{y \in [0,1]^{n+1} : A'y \leq d'\}$ if $x^*$ is an extreme point of $\{x \in [0,1]^{n} : A x \leq d\}$.
\qed
\end{proof}

In the next theorem, we show that, if the fractional support of the input vector $x^*$ is ``sparse'', then we can separate ECIs in polynomial time. 
Here, the fractional support of $x^*$ denotes the set of index $i$ with $x^*_i \in (0,1)$. 

\begin{theorem}
\label{theo:poly_solvable_eci}
Let $x^*$ be the input solution to ECI-SP. If $|\{i \in [n]: x^*_i \in (0,1)\}| \leq \alpha$, then a separating ECI can be obtained in $O(n^2 \cdot 2^\alpha \cdot \log n \cdot m)$ time, if one exists.
\end{theorem}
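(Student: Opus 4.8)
The plan is to reduce ECI separation to a polynomial number of elementary ``residual covering'' subproblems, obtained by enumerating the constraint row, the portion of the cover that lies in the fractional support of $x^*$, and the largest coefficient occurring in the cover. Fix one row $a^\transp x\le b$ of the LP relaxation and write $F:=\{i\in[n]:x^*_i\in(0,1)\}$, $N_0:=\{i:x^*_i=0\}$, $N_1:=\{i:x^*_i=1\}$, so $|F|\le\alpha$. If $C$ is a cover for this row whose ECI is violated at $x^*$, let $p\in C$ attain $\max_{i\in C}a_i$, set $\lambda:=a_p$ and $T_p:=\{j:a_j\ge\lambda\}$. Since every element of $C$ has coefficient at most $\lambda$, the support of the ECI of $C$ is exactly $C\cup T_p$, and the inequality is violated iff
\[
D(C)\;:=\;x^*(C\cup T_p)-|C|+1\;=\;x^*(T_p)+x^*\!\big(C\cap\{j:a_j<\lambda\}\big)-|C|+1\;>\;0 .
\]
I would loop over the $m$ rows, over $p\in[n]$, and over the set $F':=C\cap F$ among the at most $2^\alpha$ subsets of $F$, discarding any combination with $F'\not\subseteq\{j:a_j\le a_p\}$ or with $p\in F\setminus F'$.

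The key point is that, once the row, $p$ and $F'$ are fixed, $D(C)$ depends on the remaining (integral) part of $C$ only through a count: $D(C)=D_0-\big|\{\text{costly integral elements of }C\}\big|$, where $D_0$ is a constant determined by $x^*$, $F'$ and the coefficients, and a member of $C\cap(N_0\cup N_1)$ is \emph{costly} exactly when it lies in $N_0$ or has coefficient $\lambda$. Indeed, an element of $N_1$ with coefficient $<\lambda$ adds $1$ to both $x^*(C\cup T_p)$ and $|C|$ and so is harmless, whereas every other integral element added to $C$ costs $1$. Therefore, after freely placing into $C$ all of $N_1\cap\{j:a_j<\lambda\}$ (this only helps feasibility) together with $p$, maximizing $D(C)$ reduces to choosing as few additional costly elements as possible while keeping $a(C)\ge b+1$. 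Writing $R$ for the residual capacity still required, this is the problem of picking the fewest items — from the integral $N_0$-items of coefficient $<\lambda$ together with the integral coefficient-$\lambda$ items other than $p$ — whose coefficients sum to at least $R$. It is solved by the obvious greedy rule of taking candidates in non-increasing coefficient order, and it is infeasible precisely when no cover of this shape exists, in which case the combination is skipped.

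The algorithm reports a violated ECI as soon as some combination attains $D_0-k^\star>0$, where $k^\star$ is the number of costly integral elements in the resulting cover (namely $p$, when integral, together with the greedily chosen ones), and otherwise reports that no violated ECI exists. For correctness, any genuinely violated ECI, coming from a cover $C^\star$ with largest coefficient $\lambda^\star=a_{p^\star}$, is matched or improved upon by the combination consisting of its row, $p:=p^\star$ and $F':=C^\star\cap F$: enlarging $C^\star$ by all still-missing $N_1$-elements of coefficient $<\lambda^\star$ leaves $D$ unchanged, after which $C^\star$ is one of the covers the algorithm considers and the greedy uses no more costly elements than $C^\star$; conversely, every cover reported satisfies $a(C)>b$ and $D(C)>0$ by construction. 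Sorting each row's coefficients once and maintaining prefix sums of $x^*$ and of $a$ (over all indices and over the $N_0$-indices) lets each of the $O(m\,n\,2^\alpha)$ combinations be processed in $O(\log n)$ time, well within the claimed $O(n^2\cdot 2^\alpha\cdot\log n\cdot m)$ bound; even a naive re-scan of the candidate list, at $O(n\log n)$ per combination, still meets it.

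I expect the main obstacle to be the bookkeeping behind the identity $D(C)=D_0-|\{\text{costly elements}\}|$: one must handle ties at the threshold coefficient $\lambda$ so that the loop over $p$ really reaches every cover without causing trouble, keep the decomposition $F'/N_0/N_1$ and the forced element $p$ cleanly separated, and justify rigorously that ``include all below-threshold $N_1$-elements, then greedily add the fewest costly elements'' incurs no loss of generality.
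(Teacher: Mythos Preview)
Your proposal is correct and follows essentially the same approach as the paper: loop over the row, the index $p$ (the paper's $t$) realizing the maximum coefficient in $C$, and the fractional part of $C$, then observe that below-threshold $N_1$-elements are free and that the remaining choice is a unit-cost min-cardinality knapsack solved greedily. The only cosmetic differences are that you enumerate all of $C\cap F$ (including fractional items at the threshold $\lambda$) while the paper enumerates only the strictly-below-threshold fractional part and lets the greedy pick threshold items, and that by forcing $p\in C$ you guarantee the returned inequality is literally an ECI, whereas the paper implicitly relies on the greedy selecting a coefficient-$a_t$ item first.
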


\begin{proof}
For a given point $x^*$ and constraint $a^\transp x \leq b$ of $Ax \leq d$, there exists a separating ECI from the constraint if and only if for some $t \in [n]$, there exists a cover $C$ with $\max_{i \in C} a_i = a_t$, such that
\begin{equation}
\sum_{i \in [n]: a_i \geq a_t} x^*_i + \sum_{i \in C: a_i < a_t} x^*_i > |C|-1. \label{eci:1} 
\end{equation}
We partition $C$ into four sets, $C=T_1 \cup T_f \cup T_0 \cup T$, with $T_1 = \{i \in C : a_i < a_t, x^*_i = 1\}$, $T_f = \{i \in C : a_i < a_t, x^*_i \in (0,1)\}$, $T_0 = \{i \in C : a_i < a_t, x^*_i = 0\},$ and $T=\{i \in C : a_i = a_t\}$. 
With this definition, \eqref{eci:1} can be equivalently stated as
\begin{equation}
\label{eci:2}
    \sum_{i \in [n]: a_i \geq a_t} x^*_i > \sum_{i \in T_f} (1-x^*_i) + |T_0| + |T| - 1.
\end{equation}
The algorithm loops over all $t \in [n]$ and enumerates all $T_f \subseteq \{i \in [n] : a_i < a_t, x^*_i \in (0,1)\}$.  
By our  assumption on the cardinality of fractional support of $x^*$, there are $O(n \cdot 2^\alpha)$ iterations. 
For a fixed $t \in [n]$ and $T_f \subseteq C$, the separation problem then amounts to completing the cover $C$ so that
\begin{equation}
    \label{eci:2a}
    |T_0| + |T| < \sum_{i \in [n]: a_i \geq a_t} x_i^* - \sum_{i \in T_f} (1-x_i^*) + 1.
\end{equation}
The right-hand side of \eqref{eci:2a} is a constant, so separation for a fixed index $t$ and subset $T_f$ amounts to solving the knapsack problem
\begin{equation}
\label{eci:3}
\min_{z \in \{0,1\}^{|S_t|}} \left\{ \sum_{i \in S_t} z_i : \sum_{i \in S_t} a_i z_i \geq b_{t, T_f} \right\},
\end{equation}
where $S_t = \{i \in [n] : a_i = a_t \text{ or } a_i < a_t, x^*_i = 0\}$, and $b_{t, T_f} = b+1 - \sum_{i \in T_f} a_i - \sum_{i: a_i < a_t, x^*_i = 1} a_i.$ 
As the non-zero objective coefficients of the knapsack problem~\eqref{eci:3} are all the same, the problem can be solved by a simple greedy procedure, after sorting $\{a_i\}_{i \in S_t}$ in $O(n \log n)$ time. Therefore, overall this algorithm runs in $O(n^2 \cdot 2^\alpha \cdot \log n)$ time, since there are $O(n \cdot 2^\alpha)$ iterations. The theorem then follows by arguing for all $m$ constraints. 
\qed
\end{proof}

Theorem~\ref{theo:poly_solvable_eci} immediately implies the following corollary.

\begin{corollary}
\label{cor: 1}
If $m$ is polylogarithmic in $n$ and $x^*$ is an extreme point solution to \eqref{eq:knapsack_relaxation}, then a separating ECI can be obtained in polynomial time, if one exists. 
\end{corollary}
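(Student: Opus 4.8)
The plan is to obtain Corollary~\ref{cor: 1} as a direct specialization of Theorem~\ref{theo:poly_solvable_eci}, using the standard fact that an extreme point of the feasible region of \eqref{eq:knapsack_relaxation} has only few fractional coordinates.

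First I would establish this sparsity bound. Let $P := \{x \in [0,1]^n : Ax \leq d\}$ and suppose $x^*$ is an extreme point of $P$. Then $x^*$ is the unique solution of a subsystem consisting of $n$ linearly independent constraints of $P$ that are tight at $x^*$, chosen among the $m$ rows of $Ax \leq d$ and the $2n$ box inequalities $0 \leq x_i \leq 1$. At most $m$ of these $n$ tight constraints can come from the rows of $Ax \leq d$, so at least $n-m$ of them are box inequalities; each tight box inequality forces the corresponding coordinate of $x^*$ to be $0$ or $1$. Hence $|\{i \in [n] : x^*_i \in (0,1)\}| \leq m$.

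Next I would invoke Theorem~\ref{theo:poly_solvable_eci} with $\alpha = m$: a separating ECI, if one exists, can be found in $O(n^2 \cdot 2^m \cdot \log n \cdot m)$ time. Under the stated hypothesis on $m$ the factor $2^m$ is bounded by a polynomial in $n$, so this running time is polynomial in the size of the input, which completes the proof.

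I do not expect any real obstacle here; the corollary is an immediate consequence of Theorem~\ref{theo:poly_solvable_eci}, and the only ingredient to be supplied is the elementary bound on the number of fractional components of an extreme point. The one point to state carefully is that the hypothesis on $m$ must be strong enough to guarantee $2^m = n^{O(1)}$, i.e.\ that $m$ grows at most logarithmically in $n$.
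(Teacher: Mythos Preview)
Your argument is exactly the paper's: bound the number of fractional coordinates of an extreme point by $m$ and then invoke Theorem~\ref{theo:poly_solvable_eci} with $\alpha=m$ to get the $O(n^2\cdot 2^m\cdot m\log n)$ bound. Your closing caveat is well taken and in fact sharper than the paper's own proof: for $2^m$ to be $n^{O(1)}$ one needs $m=O(\log n)$, whereas a genuinely polylogarithmic $m$ (say $m=(\log n)^2$) only yields a quasi-polynomial bound; the paper's proof asserts polynomiality under the polylogarithmic hypothesis without addressing this point.
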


\begin{proof}
Since $x^*$ is an extreme point, we know that at most $m$ components of $x^*$ are fractional. Then Theorem~\ref{theo:poly_solvable_eci} implies that ECI-SP can be solved in $O(n^2 \cdot 2^m \cdot \log n \cdot m)$ time, which is polynomial in $n$ since $m$ is polylogarithmic in $n$. 
\qed
\end{proof}

\section{$(1,k)$-Configuration Inequalities}

In this section, we establish that the separation problem for $(1,k)$-configuration inequalities is \NP-complete using a reduction similar to the one in the proof of Theorem~\ref{theo:ECI_reduction}.

\begin{theorem}
\label{theo:CONFIG_reduction}
Problem CONFIG-SP is \NP-complete, even if $m=1$, or if $x^*$ is an extreme point solution to the LP relaxation~\eqref{eq:knapsack_relaxation}.
\end{theorem}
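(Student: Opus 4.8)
The plan is to transform CI-SP into CONFIG-SP by a construction in the spirit of the proof of Theorem~\ref{theo:ECI_reduction}. Given a CI-SP instance which, by Theorem~\ref{theo:dsp_hard}, we may take with $m=1$, row $a^\transp x\le b$, and $x^*$ an extreme point (hence with at most one fractional coordinate), I would adjoin one new variable $n+1$ and set $a'_j=a_j$ for $j\in[n]$, $a'_{n+1}=a([n])-b$, $b'=a([n])$, $c'_j=c_j$ for $j\in[n]$, and $c'_{n+1}=M$ for a polynomially bounded $M$ chosen, exactly as in Theorem~\ref{theo:ECI_reduction} via LP duality, so that $y^*=(x^*,1)$ is optimal for $\max\{(c')^\transp y:a'^\transp y\le b',\ y\in[0,1]^{n+1}\}$. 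The coefficients are arranged so that, once $y_{n+1}=1$, the new knapsack constraint reads exactly $a^\transp x\le b$. After a trivial polynomial-time check for a violated singleton cover (if one is found, we output a fixed CONFIG-SP yes-instance), we may also assume every minimal cover of $a^\transp x\le b$ has at least two elements.

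For the forward direction, if $C$ is a minimal cover with $x^*(C)>|C|-1$ and $|C|\ge 2$, then $N:=C$, $t:=n+1$, $k:=|C|$ is a $(1,k)$-configuration of the new row: $a'(N)=a(C)\le b'$, and the only size-$k$ subset of $N$ is $C$, for which $C\cup\{n+1\}$ is a minimal cover because $a(C)+a'_{n+1}>b'$ (since $a(C)>b$), $a(C\setminus\{j\})+a'_{n+1}\le b'$ for all $j\in C$ (since $a(C\setminus\{j\})\le b$), and $a(C)\le b'$. Its configuration inequality with $S=C$ is $y_{n+1}+y(C)\le|C|$, which at $y^*$ equals $1+x^*(C)>|C|$, so $y^*$ is separated.

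For the reverse direction I would show that any violated $(1,k)$-configuration $N\cup\{t\}$ of the new row encodes a violated cover of $a^\transp x\le b$, splitting on the role of $n+1$. If $n+1\notin N\cup\{t\}$, then every required minimal cover $Q\cup\{t\}$ lies in $[n]$ and has weight $a(Q\cup\{t\})\le a([n])=b'$, so it is not a cover --- a contradiction. If $n+1\in N$ and $t\ne n+1$, then forcing every size-$k$ subset of $N$ to contain $n+1$ gives $|N|=k$, hence $S=N$, and unwinding the coefficients shows $(N\setminus\{n+1\})\cup\{t\}$ is a minimal cover of $a^\transp x\le b$ that the violated configuration inequality separates. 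If $t=n+1$ and $N\subseteq[n]$, then ``$Q\cup\{n+1\}$ is a minimal cover of the new row'' unwinds, via $a'_{n+1}=a([n])-b$ and $b'=a([n])$, to exactly ``$Q$ is a minimal cover of $a^\transp x\le b$'', so every size-$k$ subset of $N$ is a minimal cover of the original and the violation gives $x^*(S)>k-1$ for some $S\subseteq N$ with $|S|\ge k$; if $|S|=k$ then $S$ itself separates $x^*$, and if $|S|>k$ I would use that $x^*$ has at most one fractional coordinate to conclude that at least $k-1$ coordinates of $S$ equal $1$, so completing them with one further coordinate of $S$ (the fractional one if $k-1$ is attained exactly) produces a size-$k$ subset $Q\subseteq N$, a minimal cover with $x^*(Q)>k-1=|Q|-1$. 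Combining the two directions and noting that the construction preserves $m=1$ and maps extreme points to extreme points (as in the last line of the proof of Theorem~\ref{theo:ECI_reduction}), Theorem~\ref{theo:dsp_hard} yields that CONFIG-SP is \NP-complete even if $m=1$, or if $x^*$ is an extreme point.

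I expect the main obstacle to be the last case above: ruling out ``spurious'' violated $(1,k)$-configurations with $|S|>k$ that do not arise from a single cover --- recall $(1,k)$-configuration inequalities can be strictly stronger than cover inequalities. This is exactly where the near-integrality of $x^*$ is indispensable: for a ``spread-out'' fractional $x^*$ a violated $(1,k)$-configuration inequality need not imply any violated cover inequality, so the reduction genuinely exploits that Theorem~\ref{theo:dsp_hard} provides hard instances with a single-row, extreme-point LP solution.
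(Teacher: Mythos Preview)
Your argument is essentially correct, but it takes a genuinely different route from the paper's. The paper adjoins \emph{two} new variables $n+1,n+2$, each with coefficient $a([n])$, and sets $b'=b+2a([n])$. The point of doubling is that every cover of the new row must contain \emph{both} $n+1$ and $n+2$; hence in any $(1,k)$-configuration $N\cup\{t\}$ one is forced to have $|N|=k$, and the configuration inequality collapses to a minimal cover inequality with no further case analysis. In particular, the paper's reverse direction never touches the structure of $x^*$: it works for an arbitrary optimal $x^*$, and each of the two hardness claims (``$m=1$'' and ``$x^*$ extreme'') is inherited separately from the corresponding clause of Theorem~\ref{theo:dsp_hard}.

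Your single-variable construction with $a'_{n+1}=a([n])-b$ and $b'=a([n])$ is more economical and has the pleasant feature that fixing $y_{n+1}=1$ reproduces the original knapsack exactly. The price is the extra case $t=n+1$ with $|S|>k$, where a violated configuration inequality need not come from a single cover; you resolve this by exploiting that an extreme point of a single-row relaxation has at most one fractional coordinate, so some $k$-subset $Q\subseteq S$ still satisfies $x^*(Q)>k-1$. This is correct, but note two things. First, you are using that CI-SP is hard with $m=1$ \emph{and} $x^*$ an extreme point simultaneously; Theorem~\ref{theo:dsp_hard} as stated only asserts the ``or'', so you should either cite Klabjan--Nemhauser directly for the conjunction or remark that their construction yields both at once. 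Second, your sentence ``we may also assume every minimal cover of $a^\transp x\le b$ has at least two elements'' is slightly off: the singleton check only lets you assume every \emph{violated} minimal cover has size $\ge 2$, which is all you need for the forward direction. With these small clarifications your proof goes through; the paper's two-variable trick simply buys a cleaner reverse direction that avoids the dependence on near-integrality of $x^*$.
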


\begin{proof}
We transform CI-SP to CONFIG-SP. 
Given an input $(A,d,c,x^*)$ to CI-SP which is in $ (\Z^{m \times n}_+, \Z^m_+, \Z^n_+, [0,1]^n)$, we will construct a corresponding input $(A', d', c', y^*)$ to CONFIG-SP such that there is a yes-certificate to CI-SP with input $(A,d,c,x^*)$ if and only if there is a yes-certificate to CONFIG-SP with input $(A', d', c', y^*)$.

In the construction, the first $n$ columns of $A'$ are the columns of $A$, while the last two columns of $A'$ are the sum of all other columns; the right-hand side vector in the construction is $d' = d + 2 Ae$; and the first $n$ components of $c'$ are the same as those of $c$, while the last two components are a large positive constant:
\begin{alignat*}{3}
A'_{ij} = A_{ij} \ \forall i \in [m], \forall j \in [n], &\qquad A'_{ik} = \sum_{j=1}^n A_{ij} \ \forall i \in [m], \forall k \in \{n+1,n+2\},\\
c'_j = c_j \ \forall j \in [n], &\qquad c'_{n+1} = c'_{n+2} = M,\\
d'_i = d_i + 2 \sum_{j=1}^n A_{ij} \ \forall i \in [m].
\end{alignat*}
The constant $M$ is chosen to be large enough so that if $x^*$ is an optimal solution to the linear program \eqref{eq:knapsack_relaxation}, then $y^* = (x^*,1,1)$ is an optimal solution to the linear program
\begin{equation}
\label{eq:lp_relaxation_config}
\max\{(c')^\transp y : A' y \leq d', y \in [0,1]^{n+2}\}.
\end{equation}
It is a consequence of linear programming duality that selecting $M \geq (\pi^*)^\transp Ae$, where $\pi^*$ are optimal dual multipliers for the inequality constraints in \eqref{eq:knapsack_relaxation}, will ensure the optimality of $y^*$. 
As it is know that there is an optimal solution $\pi^*$ whose encoding length is of polynomial size \cite{Schrijver86}, we see that $M$ exists and its encoding size is a polynomial function of the input size of CI-SP.

Let $C \subseteq [n]$ be a cover with respect to the row constraint $a^\transp x \leq b$ of $A x \leq d$ such that the associated CI cuts off $x^*: x^*(C) > |C| - 1$. Let $C': = C \cup \{n+1,n+2\}$. By definition of the input $(A', d')$, $C'$ is a cover with respect to the row constraint $a'^\transp y = (a^\transp , a^\transp e, a^\transp e) y \leq b + 2 a^\transp e = b'$
within $A' y \leq d'$, and the associated CI cuts off $y^*: y^*(C') = 2 + x^*(C) > |C| + 1 = |C'| - 1$.  As every cover inequality is dominated by a minimal cover inequality, there is a a minimal cover contained in $C'$ whose associated minimal CI cuts off $y^*$.
Every minimal CI is a special case of $(1,k)$-configuration inequality, so we have obtained a $(1,k)$-configuration inequality with respect to a row constraint within $A' y \leq d'$ that cuts off $y^*$.

To complete the proof, we must show that if $N \cup \{t\}$ is a $(1,k)$-configuration with respect to some row constraint $a'^\transp y \leq b'$ within $A' y \leq d'$, and $S \subseteq N$ with $|S| \geq k$, such that
$(|S| - k + 1) y^*_t + \sum_{i \in S} y^*_i > |S|$,
then we can construct a cover $C$ with respect to the associated row constraint in $Ax \leq b$ such that the associated CI cuts off $x^*$.

By construction of $(A', d')$, we know that the row constraint $a'^\transp y \leq b'$ takes the form $(a^\transp , a^\transp e, a^\transp e) y \leq b + 2a^\transp e$, where $a^\transp x \leq b$ is a row constraint in $A x \leq b$. 
First, observe that in the constraint $(a^\transp , a^\transp e, a^\transp e) y \leq b + 2a^\transp e$, any cover must contain both $n+1$ and $n+2$.  
By definition of a $(1,k)$-configuration, for any subset $Q \subseteq N$ with $|Q| = k$, $Q \cup \{t\}$ is a minimal cover. Specifically, $N \cup \{t\}$ is a cover. 
This implies that $\{n+1,n+2\} \subseteq N \cup \{t\}$, which means $N$ must contain $n+1$ or $n+2$. If $k \leq |N| - 1$, then for any $i' \in N$, the set $N \cup \{t\} \setminus \{i'\}$ will also be a cover. 
However, since $N$ contains $n+1$ or $n+2$, then when $i' = n+1$ (or $n+2$), $N \cup \{t\} \setminus \{i'\}$ will not be a cover. 
Therefore, $|N| = k$, and the associated $(1,k)$-configuration inequality reduces to a minimal CI, so
\begin{equation}
\label{eq:1k_ci}
y^*(N \cup \{t\}) > |N|.
\end{equation}
Let $C = N \cup \{t\} \setminus \{n+1,n+2\}$, so $|C| = |N| - 1$. Since $N \cup \{t\}$ is a cover with respect to the constraint
$(a^\transp , a^\transp e, a^\transp e) y \leq b + 2a^\transp e$, and $n+1,n+2 \in N \cup \{t\}$, we can infer that $a(C) + 2 a^\transp e > b+2 a^\transp e$, so $C$ is a cover with respect to the row constraint $a^\transp x \leq b$ of $Ax \leq d$.  Furthermore, from \eqref{eq:1k_ci} and the definition of $y^* = (1,1,x^*)$, we have $x^*(C) > |N| - 2 = |C| - 1$. Therefore, we end up with a cover $C$ whose associated CI cuts off $x^*$. 

We have thereby shown that there is a yes-certificate to CI-SP with input $(A,d,c,x^*)$ if and only if there is a yes-certificate to CONFIG-SP with input $(A', d', c', y^*)$.
Together with Theorem~\ref{theo:dsp_hard}, our proof establishes that CONFIG-SP is \NP-complete, even if $m=1$, or if $x^*$ is an extreme point to the LP relaxation~\eqref{eq:knapsack_relaxation}. 
To conclude the proof, it suffices to realize that the input $y^* = (x^*,1, 1)$ for CONFIG-SP will be an extreme point of $\{y \in [0,1]^{n+2} : A'y \leq d'\}$ if $x^*$ is an extreme point of $\{x \in [0,1]^{n} : A x \leq d\}$.
\qed
\end{proof}

We have settled the complexity of the separation problem for $(1,k)$-configuration inequalities, for an input solution $x^*$ that is an extreme point to the LP-relaxation~\eqref{eq:knapsack_relaxation}. 
In our preliminary IPCO version \cite{10.1007/978-3-031-06901-7_13}, we conjectured the separation problem to be \NP-complete, even for points $x^*$ with a small number of fractional components. 
However, we are able to refute this conjecture.
We show that, in this special case, the separation problem of $(1,k)$-configuration inequalities is in fact polynomially solvable. 
The following easy lemma gives an equivalent condition for a set $N \cup \{t\}$ to be a $(1,k)$-configuration.

\begin{lemma}
\label{lem: equi_config}
For knapsack constraint $a^\transp x \leq b$ with $a_1 \leq a_2 \leq \ldots \leq a_n$, set $N \cup \{t\} \subseteq [n]$ is a $(1,k)$-configuration with respect to the knapsack constraint if and only if $a(N) \leq b$, $a_t + a(N^{[k-1]}) \leq b$, and $a_t + a(N_{[k]}) > b$. 
\end{lemma}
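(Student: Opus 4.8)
The plan is to prove both directions of the equivalence, reducing the defining condition of a $(1,k)$-configuration---which quantifies over \emph{all} subsets $Q \subseteq N$ with $|Q| = k$---to three inequalities involving only the extreme subsets $N^{[k-1]}$ and $N_{[k]}$. The key observation is that under the sorting $a_1 \leq \cdots \leq a_n$, the quantity $a_t + a(Q)$ ranges, as $Q$ varies over $k$-subsets of $N$, between its minimum $a_t + a(N_{[k]})$ and its maximum $a_t + a(N^{[k]})$; and similarly $a_t + a(Q \setminus \{j\})$ for $j \in Q$ is minimized over all choices by $a_t + a(N_{[k-1]})$ and, for the ``most dangerous'' deletion, bounded above by $a_t + a(N^{[k-1]})$.

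First I would unwind the definition: $N \cup \{t\}$ is a $(1,k)$-configuration iff (i) $a(N) \leq b$, and (ii) for every $Q \subseteq N$ with $|Q| = k$, the set $Q \cup \{t\}$ is a minimal cover, i.e. $a_t + a(Q) > b$ and $a_t + a(Q) - a_j \leq b$ for every $j \in Q \cup \{t\}$. For the forward direction, assume $N \cup \{t\}$ is a $(1,k)$-configuration. Condition (i) gives $a(N) \leq b$ directly. Taking $Q = N_{[k]}$ (the $k$ smallest elements of $N$), the cover condition $a_t + a(Q) > b$ yields $a_t + a(N_{[k]}) > b$. For the remaining inequality $a_t + a(N^{[k-1]}) \leq b$: take $Q = N^{[k]}$ (the $k$ largest elements) and delete from $Q \cup \{t\}$ the smallest element of $Q$; minimality of $Q \cup \{t\}$ as a cover forces $a_t + a(N^{[k]}) - \min_{i \in N^{[k]}} a_i \leq b$, and the left-hand side is exactly $a_t + a(N^{[k-1]})$. (One should note $k \leq |N|$ so these subsets are well defined, and $k \geq 2$ so $N^{[k-1]}$ is nonempty in the relevant sense; the case $k-1 = 0$ is excluded since $k \geq 2$ by the definition of $(1,k)$-configuration.)

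For the converse, assume the three inequalities hold and fix an arbitrary $Q \subseteq N$ with $|Q| = k$. I must show $Q \cup \{t\}$ is a minimal cover. It is a cover because $a_t + a(Q) \geq a_t + a(N_{[k]}) > b$, since $Q$ has $k$ elements of $N$ and $N_{[k]}$ is the lightest such selection. For minimality I check that removing any single element drops the sum to $\leq b$. Removing $t$: $a(Q) \leq a(N) \leq b$. Removing $j \in Q$: then $Q \setminus \{j\}$ has $k-1$ elements of $N$, so $a(Q \setminus \{j\}) \leq a(N^{[k-1]})$, whence $a_t + a(Q) - a_j = a_t + a(Q \setminus \{j\}) \leq a_t + a(N^{[k-1]}) \leq b$. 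This covers all elements of $Q \cup \{t\}$, so $Q \cup \{t\}$ is a minimal cover; since $Q$ was arbitrary and $a(N) \leq b$ holds, $N \cup \{t\}$ is a $(1,k)$-configuration.

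The main thing to get right---rather than a genuine obstacle---is the bookkeeping of which extremal subset controls which inequality: the lower bound $a_t + a(N_{[k]}) > b$ is what guarantees \emph{every} $k$-subset plus $t$ is a cover, while the upper bound $a_t + a(N^{[k-1]}) \leq b$ is what guarantees minimality with respect to deleting a non-$t$ element of \emph{any} $k$-subset, and $a(N) \leq b$ handles both the original requirement and minimality with respect to deleting $t$. I would also remark that the monotonicity facts $a(Q) \geq a(N_{[k]})$ and $a(R) \leq a(N^{[k-1]})$ for $k$-subsets $Q$ and $(k-1)$-subsets $R$ of $N$ are immediate from the sorted order and should be stated in one line rather than belabored.
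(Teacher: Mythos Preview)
Your proposal is correct and follows essentially the same argument as the paper: both directions use the extremal subsets $N_{[k]}$ and $N^{[k]}$ to extract the three inequalities in the forward direction, and the monotonicity bounds $a(Q) \geq a(N_{[k]})$ and $a(Q \setminus \{j\}) \leq a(N^{[k-1]})$ together with $a(Q) \leq a(N)$ to verify the minimal-cover property in the converse.
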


Recall that, in the statement of Lemma~\ref{lem: equi_config}, $N^{[k-1]}$ denotes the set of the $k-1$ largest elements in $N$ and $N_{[k]}$ denotes the set of the $k$ smallest elements in $N$.

\begin{proof}
By definition of $(1,k)$-configuration, it suffices to check that, when $a(N) \leq b$, $Q \cup \{t\}$ is a minimal cover for every $Q \subseteq N$ with $|Q| = k$ if and only if  $a_t + a(N^{[k-1]}) \leq b$ and $a_t + a(N_{[k]}) > b$.

Picking $Q = N_{[k]}$, $Q \cup \{t\}$ being a cover implies that $a_t + a(N_{[k]}) > b$. Picking $Q = N^{[k]}$, $Q \cup \{t\}$ being a minimal cover implies that $a_t + a(N^{[k-1]}) \leq b$. 
Next we want to show that, for any $Q \subseteq N$ with $|Q| = k$, if $a(N) \leq b$, $a_t + a(N^{[k-1]}) \leq b$, and $a_t + a(N_{[k]}) > b$, then $Q \cup \{t\}$ is a minimal cover. 
By assumption $a_1 \leq a_2 \leq \ldots \leq a_n$, we know that $a(Q) \geq a(N_{[k]})$. 
So $a(Q) + a_t \geq a(N_{[k]}) + a_t > b$ implies that $Q \cup \{t\}$ is a cover. Now arbitrarily pick $i' \in Q \cup \{t\}$. 
If $i' = t$, then $a(Q) \leq a(N) \leq b$; If $i' \in Q$, then $a(Q \setminus \{i'\}) \leq a(N^{[k-1]})$, which gives $a(Q \setminus \{i'\}) + a_t \leq a_t + a(N^{[k-1]}) \leq b$. 
Hence, $Q \cup \{t\}$ is a minimal cover. 
\qed
\end{proof}

\begin{theorem}
\label{theo:poly_solvable_config}
Let $x^*$ be the input solution to CONFIG-SP. If $|\{i \in [n]: x^*_i \in (0,1)\}|$ is bounded by a constant, then a separating $(1,k)$-configuration inequality can be obtained in polynomial time, if one exists.
\end{theorem}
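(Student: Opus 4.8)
The plan is to follow the pattern of Theorem~\ref{theo:poly_solvable_eci}, turning CONFIG-SP for a fixed row constraint $a^\transp x \le b$ into a polynomial-size enumeration followed by a greedy subroutine, after several normalizations that exploit the monotonicity behind Lemma~\ref{lem: equi_config}. Throughout I would sort the items so that $a_1 \le \cdots \le a_n$ and write $F = \{i : x^*_i \in (0,1)\}$, $|F| \le \alpha = O(1)$, $O = \{i : x^*_i = 1\}$, $Z = \{i : x^*_i = 0\}$. First I would record three reductions. (i) We may take $N = S$: if $(N\cup\{t\},S,k)$ is violated then so is $(S\cup\{t\},S,k)$, since $S\subseteq N$ gives $a(S)\le a(N)\le b$, $a(S^{[k-1]})\le a(N^{[k-1]})$ and $a(S_{[k]})\ge a(N_{[k]})$, so the three conditions of Lemma~\ref{lem: equi_config} persist. (ii) We may take $S\cap Z=\emptyset$: deleting a variable $j$ with $x^*_j=0$ from $S$ keeps $a(S)\le b$, weakly decreases $a(S^{[k-1]})$, weakly increases $a(S_{[k]})$, and changes the violation amount $(|S|-k+1)x^*_t+x^*(S)-|S|$ by $1-x^*_t\ge 0$; and a short computation shows that a violated configuration forces $|S\cap(O\cup F)|\ge k$, so deleting all $0$-variables keeps $|S|\ge k$. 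Thus $S = A\cup G$ with $A\subseteq O\setminus\{t\}$ and $G:=S\cap F$. (iii) For fixed $t,k,G$, substituting $x^*_i=1$ for $i\in A$ shows the violation is equivalent to a bound $|A|>\theta(t,k,G)$ with an explicit $\theta$ (and requires $x^*_t>0$); since adding a $1$-item to $S$ can only increase $a(S)$ and $a(S^{[k-1]})$ and only decrease $a(S_{[k]})$, every condition of Lemma~\ref{lem: equi_config} only becomes harder as $|A|$ grows, so we may take $|A|$ to be the least integer exceeding $\theta$. In particular $|S|=|A|+|G|$ is then determined.

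Next I would enumerate: the constraint row ($m$ choices); $t\in[n]$ and $k\in\{2,\dots,n\}$; the set $G=S\cap F$ ($2^\alpha$ choices); and two threshold indices $\beta,\gamma$, intended to witness ``$S$ has exactly $k$ items of index $\le\beta$ and exactly $k-1$ of index $\ge\gamma$'' (the degenerate cases $|S|<2k-1$, where the bottom‑$k$ and top‑$(k-1)$ sets of $S$ must overlap, are handled by an analogous, simpler enumeration). This identifies $S_{[k]}$ with the index‑$\le\beta$ part of $S$, $S^{[k-1]}$ with the index‑$\ge\gamma$ part, and fixes the number of items of $A$ in each of the ranges ``$\le\beta$'', ``strictly between'', ``$\ge\gamma$''. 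In the middle and the ``$\ge\gamma$'' ranges the $A$-items appear only inside quantities that are bounded above (condition (C1) $a(S)\le b$, and additionally (C2) $a_t+a(S^{[k-1]})\le b$ in the top range), so I would simply insert the lightest admissible $1$-items there and then verify (C2). What then remains is to pick the prescribed number $j$ of $1$-items of index $\le\beta$ so that their total weight $\sigma$ meets (C3) $a_t + a(S_{[k]})>b$, i.e.\ $\sigma>L$, together with (C1), i.e.\ $\sigma\le U$, for explicit constants $L,U$.

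The main obstacle is exactly this last step: deciding whether a $j$-element subset of a prescribed set of positive integers has weight in a prescribed window $(L,U]$ is not, in general, decided by the extreme (minimum/maximum) choices alone. I would attack it with a greedy/exchange argument: arrange the $j$-subsets along a chain in which consecutive subsets differ by swapping one chosen item for one unchosen heavier item, so the weight is monotone along the chain and each step changes it by less than the heaviest item of the pool; pairing this with the structural bound forced by (C1) and (C3) — namely $a(S\setminus S_{[k]})=a(S)-a(S_{[k]})<a_t$, which controls how wide $(L,U]$ must be for any valid configuration to exist — I would argue that the chain must meet $(L,U]$ whenever a valid configuration exists, so that greedily sliding from the maximum‑weight subset downward lands inside the window. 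Should a residual gap remain, it is closed by additionally enumerating the $O(1)$ heaviest $1$-items placed in the ``$\le\beta$'' range, after which the remaining items are light enough that the sliding chain cannot step over the window. The total number of enumeration branches is $n^{O(1)}\cdot 2^{O(\alpha)}$, each resolved in polynomial time, so the overall running time is polynomial for constant $\alpha$, and a separating $(1,k)$-configuration inequality is read off from the winning branch.

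I expect the delicate part to be precisely making the window‑subset‑sum step rigorous: the three conditions of Lemma~\ref{lem: equi_config} pull the weight of $S$ in opposite directions — (C2) wants the top items small, (C3) wants the bottom items large, and (C1) couples everything — and the bookkeeping that guarantees the greedy chain (possibly after the short additional enumeration) cannot skip the target window is where the argument needs care; the earlier reductions (i)–(iii) and the determination of $|S|$ are what make this residual problem small enough to handle.
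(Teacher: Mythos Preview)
Your reductions (i)–(iii) are correct and mirror the paper's first normalizations, but the heart of the argument — the window–subset-sum step for the bottom range — has a genuine gap. Deciding whether a $j$-element subset of the $1$-items with index $\le\beta$ has weight in a prescribed interval $(L,U]$ is, in the generality you allow, a cardinality-constrained subset-sum problem. The structural bound $a(S\setminus S_{[k]})<a_t$ only yields $U-L>0$; it does not force $U-L$ to dominate the swap step size (which can be as large as $a_\beta$), so your sliding chain can jump the window. Your fallback, ``enumerate the $O(1)$ heaviest $1$-items in the $\le\beta$ range'', is asserted without justification, and nothing you have established bounds how many heavy items must be fixed before the residual step size drops below $U-L$.

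The paper sidesteps this difficulty by a different route. It first assumes, after a polynomial-time check, that no cover inequality separates $x^*$; this single assumption forces $a_t\le\Delta:=b-a(\{i\ne t:x^*_i=1\})$. From that bound it proves the key structural fact your plan is missing: both $\bar S=S^{[k-1]}\setminus S_{[k]}$ and $\underline S=S_{[k]}\setminus S^{[k-1]}$ have cardinality $<2\alpha+1$. The proof then splits into two cases. If $S^{[k-1]}\cap S_{[k]}=\emptyset$, then $k=|\underline S|=O(\alpha)$, so the entire bottom block is of constant size and can be enumerated outright. If $S^{[k-1]}\cap S_{[k]}\ne\emptyset$, a further argument shows that all but $<2\alpha$ of the $1$-items between $\max\underline S$ and $\min\bar S$ must lie in $S$, so one enumerates the small complement $T_1^c$ instead. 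In both cases there is no subset-sum window at all: once the $O(\alpha)$-sized extreme pieces (and $T_f$, $T_1^c$) are guessed, the remaining $1$-items are filled greedily from the lightest. What your approach needs is precisely this $O(\alpha)$ bound on $|\underline S|$ and $|\bar S|$, and its derivation hinges on the no-violated-CI assumption that your proposal never invokes.
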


\begin{proof}
Throughout the proof, we assume that $|\{i \in [n] : x^*_i \in (0,1)\}| \leq \alpha$, for some constant $\alpha$. 
Without loss of generality, we further assume that there does not exist a separating cover inequality for $x^*$. Since if there exists a separating cover inequality, then one can easily find it in polynomial time (see, e.g., Theorem~2 in \cite{klabjan1998complexity}), which is a special case of $(1,k)$-configuration inequality. 
Now, we assume that $\sum_{i \in S}x_i + (|S|-k+1)x_t \leq |S|$ is a $(1,k)$-configuration inequality from $(1,k)$-configuration $N \cup \{t\}$ with respect to knapsack constraint $a^\transp x \leq b$,
that is violated by point $x^*$. 
We want to show that one can find a separating $(1,k')$-configuration inequality in polynomial-time. 
Here, the separating inequality we find might be different from $\sum_{i \in S}x_i + (|S|-k+1)x_t \leq |S|$, and $k'$ might be different from $k$. Also, without loss of generality, here we assume the knapsack constraint $a^\transp x \leq b$ satisfies $a_1 \leq a_2 \leq \ldots \leq a_n$.

Since $S \subseteq N$, $|S| \geq k$, and $N \cup \{t\}$ is a $(1,k)$-configuration, by definition, $S \cup \{t\}$ is also a $(1,k)$-configuration with respect to constraint $a^\transp x \leq b$. 
First, we have the following claim.
\begin{claim}
There exists $S^* \subseteq S$, such that $S^* \cup \{t\}$ is a $(1,k)$-configuration whose corresponding $(1,k)$-configuration inequality separates $x^*$, and $x^*_i > 0$ for any $i \in S^* \cup \{t\}$.
\end{claim}
\begin{cpf}
From $\sum_{i \in S}x^*_i + (|S|-k+1)x^*_t > |S|$, we know that $x^*_t > 0$. If for some $i' \in S$, there is $x^*_{i'} = 0$, then consider $S' = S \setminus \{i'\}$. Here $|S'| \geq k$. If not, then $|S| = k$, which means the original $(1,k)$-configuration inequality is simply a cover inequality. From $\sum_{i \in S}x^*_i + x^*_t > |S|$, we know that $x^*_{i'} = 0$ is impossible. 
Here $S' \cup \{t\}$ is also a $(1,k)$-configuration. Moreover:
$$
\sum_{i \in S'}x^*_i + (|S'|-k+1)x^*_t = \sum_{i \in S}x^*_i + (|S|-k+1)x^*_t - x^*_t > |S| - 1 = |S'|.
$$
Therefore, $S' \cup \{t\}$ gives another separating $(1,k)$-configuration inequality. 
Recursively removing index $i \in S'$ with $x^*_i = 0$, we end up obtaining a set $S^* \subseteq S$ with the desired properties.
\end{cpf}

Due to the above claim, without loss of generality, we will assume $x^*_i > 0$ for any $i \in S \cup \{t\}$.
Denote $\bar{S}:=S^{[k-1]} \setminus S_{[k]}$, $\underline{S}:= S_{[k]} \setminus S^{[k-1]}$, $T = S \setminus (\bar{S} \cup \underline{S})$, and $N_1:= \{i \in [n] \setminus \{t\} : x^*_i = 1\}$. 
Throughout, we assume that $\bar{S} \neq \emptyset$, since $\bar{S}= \emptyset$ corresponds to the case of $|S| = k$, which means the original separating $(1,k)$-configuration inequality is simply a cover inequality.
Since $x^*$ is a feasible solution to the LP relaxation \eqref{eq:knapsack_relaxation}, we know $a(N_1) \leq b$. Denote $\Delta: = b - a(N_1)$. 
\begin{claim}
\label{claim: Delta}
$a_t \leq \Delta$.
\end{claim}
\begin{cpf}
Consider set $N_1 \cup \{t\}$. 
If $N_1 \cup \{t\}$ is a cover to knapsack constraint $a^\transp x \leq b$, then the cover inequality $\sum_{i \in N_1} x_i + x_t \leq |N_1|$ will be violated by $x^*$, because $x^*_i = 1$ for any $i \in N_1$ and $x^*_t > 0$. 
This contradicts our initial assumption. 
Thus, $a(N_1) + a_t \leq b = a(N_1) + \Delta$, which yields that $a_t \leq \Delta$.
\end{cpf}
\begin{claim}
\label{claim: bound_S}
$|\bar{S}| < 2\alpha$ and $|\underline{S}| < 2\alpha + 1$. 
\end{claim}
\begin{cpf}
First, let $S_{[k], 1}: = \{i \in S_{[k]} : x^*_i = 1\}$, $S_{[k], f}: = \{i \in S_{[k]} : x^*_i \in (0,1)\}$ denote the binary support and fractional support of $S_{[k]}$, let $S^{[k-1]}_1: = \{i \in S^{[k-1]} : x^*_i = 1\}$, $S^{[k-1]}_f: = \{i \in S^{[k-1]} : x^*_i \in (0,1)\}$ denote the binary support and fractional support of $S^{[k-1]}$, and let $\bar{S}_1: = \{i \in \bar S : x^*_i = 1\}$, $\bar{S}_f: = \{i \in \bar S : x^*_i \in (0,1)\}$ denote the binary support and fractional support of $x^*$ over $\bar S$. 
By Lemma~\ref{lem: equi_config}, we have 
$$a_t + a(S_{[k], 1}) + a(S_{[k], f}) = a_t + a(S_{[k]}) > b = a(N_1) + \Delta.$$
Hence, we obtain
\begin{equation}
\label{eq: 1}
a(N_1 \setminus S_{[k], 1}) < a(S_{[k], f}) + a_t - \Delta.
\end{equation}
From Claim~\ref{claim: Delta} and \eqref{eq: 1}, we obtain that $a(N_1 \setminus S_{[k], 1}) < a(S_{[k], f})$. 
Notice that $\bar{S}_1 \subseteq N_1$ and $\bar{S}_1 \cap S_{[k], 1} = \emptyset$, so $\bar{S}_1 \subseteq N_1 \setminus S_{[k], 1}$ and
\begin{equation}
\label{eq: 2}
a(\bar{S}_1) \leq a(N_1 \setminus S_{[k], 1}) < a(S_{[k], f}). 
\end{equation}
Here by definition, $\bar{S} \subseteq \bar{S} =S^{[k-1]} \setminus S_{[k]}$. 
Since $S^{[k-1]}$ denotes the $k-1$ largest elements in $S$, $S_{[k]}$ denotes the $k$ smallest elements in $S$, and $a_1 \leq \ldots \leq a_n$, we know that for any $i \in \bar{S}_1$ and $j \in S_{[k], f}$, we have $a_i \geq a_j$. In this case, \eqref{eq: 2} implies that $|\bar{S}_1| < |S_{[k], f}| \leq \alpha$.
Here, the inequality $|S_{[k], f}| \leq \alpha$ holds because of our initial assumption that $|\{i \in [n]: x^*_i \in (0,1)\}| \leq \alpha$. 
Therefore, 
$$
|\bar{S}| = |\bar{S}_1| + |\bar{S}_f| < 2 \alpha.
$$
Here the last inequality also utilizes the fact that $|\bar{S}_f| \leq |\{i \in [n]: x^*_i \in (0,1)\}| \leq \alpha$. Since $|\underline{S}| = |\bar S| + 1$, we also obtain that $|\underline{S}| < 2\alpha + 1$. 
\end{cpf}
Next, we continue our discussion by considering separately two cases, depending on whether $S^{[k-1]} \cap S_{[k]} = \emptyset$ or not. 
Here, we also decompose $T = T_f \cup T_1,$ where $T_1: = \{i \in T : x^*_i = 1\}$, $T_f: = \{i \in T : x^*_i \in (0,1)\}$. 
We remark that the size of the fractional support $T_f$ of $T$ satisfies $|T_f| \leq \alpha$.

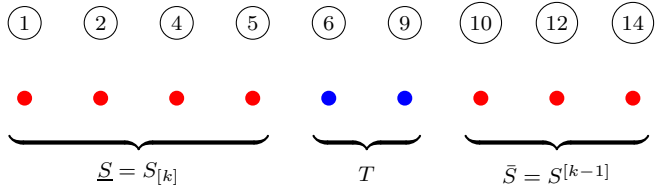
\begin{figure}[t]
\begin{center}
\begin{tikzpicture}[ultra thick]
\node at (0,1) {\circled{1}};
\node at (1,1) {\circled{2}};
\node at (2,1) {\circled{4}};
\node at (3,1) {\circled{5}};
\node at (4,1) {\circled{6}};
\node at (5,1) {\circled{9}};
\node at (6,1) {\circled{10}};
\node at (7,1) {\circled{12}};
\node at (8,1) {\circled{14}};
\filldraw[red] (0,0) circle (2pt);
\filldraw[red] (1,0) circle (2pt);
\filldraw[red] (2,0) circle (2pt);
\filldraw[red] (3,0) circle (2pt);
\filldraw[blue] (4,0) circle (2pt);
\filldraw[blue] (5,0) circle (2pt);
\filldraw[red] (6,0) circle (2pt);
\filldraw[red] (7,0) circle (2pt);
\filldraw[red] (8,0) circle (2pt);
\draw [decorate, decoration = {calligraphic brace, mirror, amplitude=5pt}] (-0.2,-0.5) -- (3.2,-0.5);
\node at (1.5, -1) {$\underline{S} = S_{[k]}$}; 
\draw [decorate, decoration = {calligraphic brace, mirror, amplitude=5pt}] (5.8,-0.5) -- (8.2,-0.5);
\node at (7, -1) {$\bar{S} = S^{[k-1]}$}; 
 \draw [decorate, decoration = {calligraphic brace, mirror, amplitude=5pt}] (3.8,-0.5) -- (5.2,-0.5);
\node at (4.5, -1) {$T$}; 
\end{tikzpicture}
\end{center}
\caption{An example for Case~1: $S=\{1,2,4,5,6,9,10,12,14\}$ and $k=4$. Here, $S_{[k]} = \{1,2,4,5\}$, $S^{[k-1]} = \{10,12,14\}$, and $S_{[k]} \cap S^{[k-1]} = \emptyset$.}
\label{fig: case1}
\end{figure}

\smallskip \noindent
\textbf{Case 1:} $S^{[k-1]} \cap S_{[k]} = \emptyset$. 
In this case, we know $S_{[k]} = \underline{S}$, $S^{[k-1]} = \bar{S}$. 
See Fig.~\ref{fig: case1} for an example. 
By Claim~\ref{claim: bound_S}, we have $k = |\underline{S}| < 2\alpha+1$. 
By Lemma~\ref{lem: equi_config}, in order for $S \cup \{t\}$ to be a $(1,k)$-configuration, we must have: $a(\underline{S}) + a_t > b$, $a(\bar{S}) + a_t \leq b$, and $a(\underline{S}) + a(\bar{S}) + a(T) = a(S) \leq b$. 
Then, for fixed $t, \underline{S}, \bar{S}$ and $T_f$, set $T_1$ satisfies the following:
\begin{align}
    & a(T_1) \leq b-a(\underline{S}) - a(\bar{S}) - a(T_f), \label{alig: 1}\\
    & |T_1| > (|\bar{S}| + |\underline{S}| + |T_f|) \frac{1-x^*_t}{x^*_t} + k-1 - \frac{x^*(\bar{S}) + x^*(\underline{S}) + x^*(T_f)}{x^*_t}, \label{alig: 2}\\
    & T_1 \subseteq \{i \in N_1 : \max_{j \in \underline{S}} j < i < \min_{j \in \bar{S}} j \}. \label{alig: 3}
\end{align}
Here, \eqref{alig: 3} holds because $T = S \setminus (\underline{S} \cup \bar{S})$, and \eqref{alig: 2} is derived from the fact that
$$
(|S|-k+1) x^*_t + \sum_{i \in S} x^*_i > |S|,
$$
and $|S| = |\underline{S}| + |T_1| + |T_f| + |\bar{S}|$.

Notice that for fixed $t, \underline{S}, \bar{S}$, and $T_f$, the right hand side constants of \eqref{alig: 1}
 and \eqref{alig: 2} are fixed as well, so the following greedy procedure will produce a feasible $T_1$ satisfying \eqref{alig: 1}--\eqref{alig: 3} if one exists: begin with $T_1 = \emptyset$, within set $\{i \in N_1 : \max_{j \in \underline{S}} j < i < \min_{j \in \bar{S}} j \}$, recursively add the smallest index $i$ (which corresponds to the smallest weight $a_i$) into set $T_1$, and stop when \eqref{alig: 1} becomes invalid after the next addition, or $T_1 = \{i \in N_1 : \max_{j \in \underline{S}} j < i < \min_{j \in \bar{S}} j \}$.

Therefore, in the case $S_{[k]} \cap S^{[k]} = \emptyset$, CONFIG-SP can be solved by the following procedure:
\begin{enumerate}
\item Arbitrarily pick $t \in [n]$, $k \leq 2\alpha$, and set $\underline{S}, \bar{S}, T_f$ such that: 
    \begin{align*}
    & |\underline{S}| = k = |\bar{S}|+1,\\
    & \max_{j \in \underline{S}} j < \min_{j \in \bar{S}} j,\\
    & a(\underline{S}) + a_t > b, \\
    & a(\bar{S}) + a_t \leq b, \\
    & a(\underline{S}) + a(\bar{S}) \leq b, \\
    & T_f \subseteq \{i \in [n]: x^*_i \in (0,1), \max_{j \in \underline{S}}j < i < \min_{j \in \bar{S}} j\}.
    \end{align*}
\item Begin with $T_1 = \emptyset$, within set $\{i \in N_1 : \max_{j \in S_{[k]}} j < i < \min_{j \in S^{[k-1]}} j \}$, recursively add the smallest index into set $T_1$, and stop when \eqref{alig: 1} becomes invalid after the next addition, or $T_1 = \{i \in N_1 : \max_{j \in \underline{S}} j < i < \min_{j \in \bar{S}} j \}$.
\item If \eqref{alig: 2} holds, then the set $\underline{S}\cup T_1 \cup T_f \cup \bar{S} \cup \{t\}$ is a $(1,k)$-configuration whose associated $(1,k)$-configuration inequality is violated by $x^*$. 
\end{enumerate}
Note that step~2 can be performed in $O(n \log n)$ time, step~1 can be performed in $O(n)$ time, and the total number of combination of different $t \in [n]$, $k \leq 2\alpha$, and set $\underline{S}, \bar{S}, T_f$ with $|\underline{S}|= k = |\bar{S}|+1$ can be upper bounded by:
$$
n \cdot 2\alpha \cdot \sum_{k=0}^{2\alpha} \binom{n}{k} \cdot \sum_{k=0}^{2\alpha-1} \binom{n}{k} \cdot 2^\alpha = O(n^{4\alpha} \cdot 2^{\alpha+1} \cdot \alpha).
$$
Hence, the above procedure can be implemented in $O(n^{4\alpha+1} \cdot 2^{\alpha+1} \cdot \log n \cdot \alpha)$ time. 

\begin{figure}[t]
\begin{center}
\begin{tikzpicture}[ultra thick]
\node at (0,2) {\circled{1}};
\node at (1,2) {\circled{2}};
\node at (2,2) {\circled{4}};
\node at (3,2) {\circled{5}};
\node at (4,2) {\circled{6}};
\node at (5,2) {\circled{9}};
\node at (6,2) {\circled{10}};
\node at (7,2) {\circled{12}};
\node at (8,2) {\circled{14}};
\filldraw[red] (0,0) circle (2pt);
\filldraw[red] (1,0) circle (2pt);
\filldraw[red] (2,0) circle (2pt);
\filldraw[red] (3,0) circle (2pt);
\filldraw[blue] (4,0) circle (2pt);
\filldraw[blue] (5,0) circle (2pt);
\filldraw[red] (6,0) circle (2pt);
\filldraw[red] (7,0) circle (2pt);
\filldraw[red] (8,0) circle (2pt);
\draw [decorate, decoration = {calligraphic brace, mirror, amplitude=5pt}] (-0.2,-0.5) -- (3.2,-0.5);
\node at (1.5, -1) {$\underline{S}$}; 
\draw [decorate, decoration = {calligraphic brace, mirror, amplitude=5pt}] (5.8,-0.5) -- (8.2,-0.5);
\node at (7, -1) {$\bar{S}$}; 
 \draw [decorate, decoration = {calligraphic brace, mirror, amplitude=5pt}] (3.8,-0.5) -- (5.2,-0.5);
\node at (4.5, -1) {$T$}; 
\draw [pen colour={gray}, decorate, decoration = {calligraphic brace}] (-0.2,0.5) -- (5.2,0.5);
\node at (2.5, 0.9) {$S_{[k]}$}; 
\draw [pen colour={gray}, decorate, decoration = {calligraphic brace}] (3.8,0.8) -- (8.2,0.8);
\node at (6, 1.2) {$S^{[k-1]}$};
\end{tikzpicture}
\end{center}
\caption{An example for Case~2. $S=\{1,2,4,5,6,9,10,12,14\}$ and $k=6$. Here, $S_{[k]} = \{1,2,4,5,6,9\}$, $S^{[k-1]} = \{6,9,10,12,14\}$, and $S_{[k]} \cap S^{[k-1]} = T \neq \emptyset$.}
\label{fig: case2}
\end{figure}

\smallskip \noindent
\textbf{Case 2:} $S^{[k-1]} \cap S_{[k]} \neq \emptyset$. In this case, $T = S \setminus (\bar{S} \cup \underline{S}) = S^{[k-1]} \cap S_{[k]}$. See Fig.~\ref{fig: case2} for an example. For fixed $\underline{S}$ and $\bar{S}$, denote $D_1: = \{i \in N_1 : \max_{j \in \underline{S}} j < i < \min_{j \in \bar{S}} j \}$. Then clearly, $T_1 \subseteq D_1$. 
\begin{claim}
$|D_1 \setminus T_1| < 2\alpha$. 
\end{claim}
\begin{cpf}
From 
$$
a_t + a(S_{[k]})  = a_t + a(\underline{S}) + a(T_1) + a(T_f) > b = a(D_1) + b-a(D_1),
$$
we obtain that
\begin{equation}
\label{eq: 10}
    a(D_1 \setminus T_1) < a(T_f) + a(\underline{S}) + a_t - \left(b-a(D_1) \right).
\end{equation}
Let $\underline{S}_1:=\{i \in \underline{S}: x^*_i = 1\}$, $\underline{S}_f:= \{i \in \underline{S}:x^*_i \in (0,1)\}$, $N_f:=\{i \in [n] \setminus \{t\}: x^*_i \in (0,1)\}$. 
Since $a^\transp x^* \leq b$, we have
\begin{equation}
\label{eq: 11}
a(\underline{S}_1) + \sum_{i \in N_f}a_i x^*_i + a_t x^*_t + a(D_1) \leq a^\transp x^* \leq b.
\end{equation}
Moreover, from Claim~\ref{claim: Delta} and $\Delta = b-a(N_1) \leq \sum_{i \in N_f} a_i x^*_i + a_t x^*_t$, we have 
\begin{equation}
\label{eq: 12}
 a_t \leq \sum_{i \in N_f} a_i x^*_i + a_t x^*_t.    
\end{equation}
Therefore,
\begin{align*}
    a(\underline{S}) + a_t & = a(\underline{S}_1) + a(\underline{S}_f) + a_t && \\
    & \leq a(\underline{S}_1) + a(\underline{S}_f) + \sum_{i \in N_f} a_i x^*_i + a_t x^*_t && (\text{by } \eqref{eq: 12}) \\
    & \leq  a(\underline{S}_f) + b-a(D_1). && (\text{by } \eqref{eq: 11})
\end{align*}
Combined with \eqref{eq: 10}, we obtain
\begin{equation}
\label{eq: 13}
a(D_1 \setminus T_1) < a(T_f) + a(\underline{S}_f).    
\end{equation}
Since $a_t + a(\underline{S}) + a(T) = a_t + a(S_{[k]}) > b \geq a_t + a(S^{[k-1]}) = a_t + a(\bar{S}) + a(T)$, we have $a(\underline{S}) > a(\bar{S})$. Let $i^* := \max_{j \in \underline{S}} j$ and $j^*:=\min_{j \in \bar{S}} j$. 
Then we have
$$
|\underline{S}| \cdot a_{i^*} \geq a(\underline{S}) > a(\bar{S}) \geq |\bar{S}| \cdot a_{j^*}.
$$
Since $|\bar{S}| = |\underline{S}|-1 \geq 1$, we obtain $a_{j^*} < 2 a_{i^*}$. 
Together with \eqref{eq: 13}, we have
$$
|D_1 \setminus T_1| \cdot a_{i^*} \leq a(D_1 \setminus T_1) < a(T_f \cup \underline{S}_f) \leq |T_f \cup \underline{S}_f| \cdot a_{j^*} \leq \alpha \cdot a_{j^*} < 2 \alpha \cdot a_{i^*}.
$$
This concludes the proof of the claim.
\end{cpf}
From the above claim, we know that, for fixed $\underline{S}$ and $\bar{S}$, there can only be polynomially many sets $T_1 \subseteq D_1$.
Therefore, in the case of $S_{[k]} \cap S^{[k]} \neq \emptyset$, CONFIG-SP can be solved by the following procedure:
\begin{enumerate}
\item Arbitrarily pick $t \in [n]$ and set $\underline{S}, \bar{S}, T_f, T_1^c$, such that:
    \begin{align*}
    & |\underline{S}| = |\bar{S}|+1 \leq 2\alpha,\\
    & a(\underline{S}) + a(\bar{S}) \leq b, \\
    & \max_{j \in \underline{S}} j < \min_{j \in \bar{S}} j,\\
    & T_f \subseteq \{i \in [n]: x^*_i \in (0,1), \max_{j \in \underline{S}}j < i < \min_{j \in \bar{S}} j\}, \\
    & T_1^c \subseteq \{i \in [n]: x^*_i=1, \max_{j \in \underline{S}}j < i < \min_{j \in \bar{S}} j\},\\
    & |T_1^c| \leq 2\alpha-1.
    \end{align*}
\item 
Let $S=\underline{S} \cup T_f \cup (D_1 \setminus T_1^c) \cup \bar{S}$ and $k=|S|-|\bar{S}|$, where $D_1 = \{i \in N_1 : \max_{j \in \underline{S}} j < i < \min_{j \in \bar{S}} j \}$. 
If 
\begin{align*}
& a(S) \leq b, a_t + a(S \setminus \bar{S}) > b \geq a_t + a(S \setminus \underline{S}), \\
& (|S|-k+1)x^*_t + x^*(S) > |S|,
\end{align*}
then $S \cup \{t\}$ gives a separating $(1,k)$-configuration inequality for $x^*$. 
\end{enumerate}
Here, the total number of combinations of different $t \in [n]$ and sets $\underline{S}, \bar{S}, T_f, T_1^c \subseteq [n]$ with $|\underline{S}| = |\bar{S}|+1 \leq 2\alpha$, $ T_f \subseteq \{i \in [n]: x^*_i \in (0,1)\}$, and $|T_1^c| \leq 2\alpha-1$
can be upper bounded by
$$
n \cdot \sum_{k=0}^{2\alpha} \binom{n}{k} \cdot \sum_{k=0}^{2\alpha-1} \binom{n}{k} \cdot 2^\alpha \cdot \sum_{k=0}^{2\alpha-1} \binom{n}{k} = O(n^{6\alpha-1}\cdot 2^\alpha).
$$
Hence, the above procedure can be implemented in polynomial time. 

From the discussion provided in the above two cases, the proof is complete.
\qed
\end{proof}

Similarly to how Corollary~\ref{cor: 1} directly follows from Theorem~\ref{theo:poly_solvable_eci}, Theorem~\ref{theo:poly_solvable_config} implies the following corollary.

\begin{corollary}
\label{cor: config}
If $m$ is a constant and $x^*$ is an extreme point solution to \eqref{eq:knapsack_relaxation}, then a separating $(1,k)$-configuration inequality can be obtained in polynomial time, if one exists. 
\end{corollary}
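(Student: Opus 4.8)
The plan is to deduce the corollary directly from Theorem~\ref{theo:poly_solvable_config}, in complete analogy with the way Corollary~\ref{cor: 1} is obtained from Theorem~\ref{theo:poly_solvable_eci}. The only thing that needs to be checked is that an extreme point of the LP relaxation~\eqref{eq:knapsack_relaxation} has a bounded number of fractional coordinates whenever $m$ is a constant.

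First I would invoke the standard characterization of vertices. Since $x^*$ is an extreme point of the polyhedron $\{x \in [0,1]^n : A x \leq d\}$, there are $n$ linearly independent constraints, among the $m$ rows of $A x \leq d$ and the $2n$ box constraints $x_i \geq 0$, $x_i \leq 1$, that are tight at $x^*$. At most $m$ of these tight constraints can come from $A x \leq d$, so at least $n-m$ of them are of the form $x_i = 0$ or $x_i = 1$; in particular at least $n-m$ coordinates of $x^*$ are integral. Hence
\[
|\{i \in [n] : x^*_i \in (0,1)\}| \leq m .
\]

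Second, set $\alpha := m$, which is a constant by hypothesis. The bound above says that the fractional support of $x^*$ has size at most the constant $\alpha$, so Theorem~\ref{theo:poly_solvable_config} applies verbatim: a separating $(1,k)$-configuration inequality for $x^*$ can be produced, if one exists, by the procedures described in its proof. Inspecting those procedures, their running times are $O(n^{4\alpha+1} \cdot 2^{\alpha+1} \cdot \log n \cdot \alpha)$ in Case~1 and $O(n^{6\alpha-1} \cdot 2^\alpha \cdot \mathrm{poly}(n))$ in Case~2 (plus the $O(\mathrm{poly}(n))$ preliminary check for a separating cover inequality, which is already folded into that proof), and all of these are polynomial in $n$ once $\alpha = m$ is fixed. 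Finally, one should apply this to each of the $m$ row constraints of~\eqref{eq:knapsack_relaxation} and take the union over rows, which only multiplies the running time by $m$.

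There is essentially no serious obstacle here; the statement is a routine corollary. The only points worth stating carefully are the vertex-counting argument of the first paragraph and the observation that, unlike Corollary~\ref{cor: 1} for ECIs, the degree of the polynomial produced by Theorem~\ref{theo:poly_solvable_config} grows linearly in $\alpha = m$, so this argument yields a polynomial-time algorithm only for \emph{constant} $m$, not for $m$ polylogarithmic in $n$.
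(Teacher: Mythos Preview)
Your proposal is correct and follows exactly the approach the paper intends: the paper simply states that Corollary~\ref{cor: config} follows from Theorem~\ref{theo:poly_solvable_config} in the same way Corollary~\ref{cor: 1} follows from Theorem~\ref{theo:poly_solvable_eci}, and your argument (bounding the fractional support by $m$ via the extreme-point characterization, then invoking Theorem~\ref{theo:poly_solvable_config}) is precisely that. Your closing remark that the $n^{O(\alpha)}$ running time forces $m$ to be constant rather than polylogarithmic is also accurate and explains the discrepancy in hypotheses between Corollaries~\ref{cor: 1} and~\ref{cor: config}.
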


\section{Generalized $(1,k)$-Configuration Inequalities}

In this section, we study a simple generalization of the $(1,k)$-configuration inequalities, which we call \emph{generalized $(1,k)$-configuration inequalities}.

\begin{definition}
For knapsack constraint $a^\transp x \leq b$, a set $N \cup \{t\}$ with $N \subsetneq [n]$ and $t \notin N$ is called a \emph{generalized $(1,k)$-configuration} for $k \leq |N|$, if $Q \cup \{t\}$ is a cover for every $Q \subseteq N$ with $|Q| = k$. For any generalized $(1,k)$-configuration $N \cup \{t\}$, the inequality 
\begin{equation}
    \label{eq: generalized_config}
    (n'-k+1) x_t + \sum_{i \in N} x_i \leq n'
\end{equation}
is called a \emph{generalized $(1,k)$-configuration inequality}. Here, $n'$ is the smallest number such that inequality $\sum_{i \in N} x_i \leq n'$ is valid for $K$. 
\end{definition}

It is easy to see that any $(1,k)$-configuration inequality is a generalized $(1,k)$-configuration inequality. 
In fact, when $a(N) \leq b$ and $Q \cup \{t\}$ is a \emph{minimal} cover for every $Q \subseteq N$ with $|Q| = k$, then the inequality \eqref{eq: generalized_config} becomes a $(1,k)$-configuration inequality. 
The next simple theorem states that any generalized $(1,k)$-configuration inequality is also valid for the knapsack polytope $K$. 

\begin{theorem}
Let $N \cup \{t\}$ be a generalized $(1,k)$-configuration with respect to knapsack constraint $a^\transp x \leq b$. Then inequality~\eqref{eq: generalized_config} is valid for $K$. 
\end{theorem}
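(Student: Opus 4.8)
The plan is to show that inequality~\eqref{eq: generalized_config} holds at every point $x \in \{0,1\}^n$ with $a^\transp x \leq b$, which suffices since $K$ is the convex hull of such points. Fix such a feasible 0/1 point $x$, and let $R := \{i \in N : x_i = 1\}$ be the set of indices in $N$ selected by $x$. The key case distinction is on the size of $R$ relative to $k$: either $|R| \le k-1$, or $|R| \ge k$.

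In the first case, where $|R| \leq k-1$, I would argue as follows. Since $x_i = 0$ for $i \in N \setminus R$, we have $\sum_{i \in N} x_i = |R| \le k-1$, and because $x_t \le 1$ we get $(n'-k+1)x_t + \sum_{i \in N} x_i \le (n'-k+1) + (k-1) = n'$, provided $n' - k + 1 \ge 0$; this nonnegativity should follow from the definition of $n'$ together with $k \le |N|$ (indeed $n' \ge |N| \ge k$ would already do, but in fact one only needs $n' \ge k-1$, and since $N \cup \{t\}$ is a generalized $(1,k)$-configuration any $k$-subset $Q \subseteq N$ together with $t$ forms a cover, so $|Q| = k$ points of $N$ cannot all be simultaneously set to $1$ while also having $x_t = 1$; hence $n' \ge k-1$ is forced once one checks the case $x_t = 1$). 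In the second case, where $|R| \geq k$, pick any $Q \subseteq R$ with $|Q| = k$. Then $Q \cup \{t\}$ is a cover, i.e.\ $\sum_{i \in Q} a_i + a_t > b$. Since $x$ is feasible and $x_i = 1$ for all $i \in Q$, if $x_t$ were also $1$ we would have $a^\transp x \ge \sum_{i \in Q} a_i + a_t > b$, a contradiction; therefore $x_t = 0$. Consequently the left-hand side of~\eqref{eq: generalized_config} equals $\sum_{i \in N} x_i = |R|$, and by the definition of $n'$ as the smallest integer making $\sum_{i \in N} x_i \le n'$ valid for $K$, we have $|R| = \sum_{i\in N} x_i \le n'$, so the inequality holds.

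The only genuinely delicate point — the main obstacle — is verifying $n' - k + 1 \ge 0$ so that the bound $x_t \le 1$ can be used in the first case; without this the coefficient of $x_t$ could be negative and the argument would collapse. I would handle it by noting that the existence of a generalized $(1,k)$-configuration already forces $n' \ge k-1$: take the $0/1$ point $x$ that sets some $k$-subset $Q \subseteq N$ to $1$ and everything else (including $t$) to $0$. Whether this point lies in $K$ depends on $a(Q) \le b$, which need not hold in general; so instead I would observe that we never actually need $n' \ge k-1$ in case two (there $x_t = 0$ and the claim is immediate from the definition of $n'$), and in case one we are free to first dispose of the sub-case $x_t = 0$ (immediate from the definition of $n'$) and then treat $x_t = 1$. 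When $x_t = 1$ and $|R| \le k-1$, the desired inequality reads $n' - k + 1 + |R| \le n'$, i.e.\ $|R| \le k - 1$, which is exactly the hypothesis of this sub-case; so the inequality holds regardless of the sign of $n'-k+1$. Thus the potential obstacle dissolves once the case analysis is organized as ($x_t = 0$) versus ($x_t = 1$) first, and then on $|R|$, and no separate lower bound on $n'$ is required.

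To summarize the steps in order: (1) reduce to checking the inequality at an arbitrary $x \in K \cap \{0,1\}^n$; (2) if $x_t = 0$, conclude directly from the minimality of $n'$; (3) if $x_t = 1$, use the generalized configuration property to show at most $k-1$ coordinates of $N$ can be $1$, and then verify $n'-k+1+\sum_{i\in N}x_i \le n'$ reduces to $\sum_{i\in N}x_i \le k-1$; (4) assemble the two cases. I expect step (3) to be the crux, and it rests entirely on the defining property that every $k$-subset of $N$ together with $t$ is a cover.
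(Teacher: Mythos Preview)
Your proposal is correct, and after your own reorganization in the final paragraph it coincides exactly with the paper's proof: split on $x_t$, use the definition of $n'$ when $x_t=0$, and use the cover property of every $k$-subset of $N$ together with $t$ to force $\sum_{i\in N}x_i\le k-1$ when $x_t=1$. The initial detour through a case split on $|R|$ and the worry about the sign of $n'-k+1$ are unnecessary once you lead with the $x_t$ split, as you yourself observe.
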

\begin{proof}
Arbitrarily pick $x' \in \{0,1\}^n$ with $a^\transp x' \leq b$. 
We consider the component $x'_t$. 
If $x'_t = 0$, then \eqref{eq: generalized_config} reduces to $\sum_{i \in N} x'_i \leq n'$, which is true because of the assumption on $n'$. 
If $x'_t = 1$, then \eqref{eq: generalized_config} reduces to $\sum_{i \in N} x'_i \leq k-1$. If $\sum_{i \in N} x'_i \geq k$, then arbitrarily pick $Q \subseteq \{i \in N: x'_i = 1\}$ with $|Q| = k$. By definition of generalized $(1,k)$-configuration, we know that $Q \cup \{t\}$ is a cover. However, $a(Q \cup \{t\}) \leq a^\transp x' \leq b$, which gives the contradiction.
\qed
\end{proof}

The next two examples show that the class of generalized $(1,k)$-configuration inequalities is strictly broader than the class of $(1,k)$-configuration inequalities. 
In particular, by relaxing either the ``minimal cover'' assumption, or the assumption $a(N) \leq b$, in the definition of $(1,k)$-configuration inequality, one is able to obtain different facet-defining inequalities.

\begin{example}
Consider the knapsack constraint $(2,4,5,6,7,20)^\transp x \leq 30$. 
For $N=\{1,2,3,4,5\}$ and $k=3$, $N \cup \{6\}$ is a generalized $(1,k)$-configuration, but it is not a $(1,k)$-configuration. 
This is because $\{4,5,6\}$ is a cover, so $\{3,4,5\} \cup \{t\}$ cannot be a minimal cover. 
However, for any $Q \subseteq N$ with $|Q| = 3$, we have $a_6 + a(Q) \geq a_6 + a(N_{[3]}) = a_6 + a_1 + a_2 + a_3 = 31 > 30$. 
Furthermore, the corresponding generalized $(1,k)$-configuration inequality 
$\sum_{i=1}^5 x_i + 3x_6 \leq 5$ is facet-defining.
Similarly, another facet-defining inequality $x_1 + x_2 + x_4 + x_5 + 2x_6 \leq 4$ is a generalized $(1,k)$-configuration inequality from $N=\{1,2,4,5\}$, $t=6$, $k=3$, while it is not a $(1,k)$-configuration inequality. 
Notice that in both cases, the $(1,k)$-configuration $N \cup \{t\}$ satisfies $a(N) \leq b$, so the only assumption violated in the definition of $(1,k)$-configuration inequality is the ``minimal cover" assumption.
$\hfill\diamond$
\end{example}

\begin{example}
Consider the knapsack constraint $(2,4,7,10,10,20)^\transp x \leq 30$. 
Consider $N=\{2,3,4,5\}$, $t = 6$, $k=2$. 
Then $a(N) = 31>30$, and $\sum_{i \in N}x_i \leq n':=3$ is valid for $K$, and for any $Q \subseteq N$ with $|Q| = 2$, $Q \cup \{t\}$ is a minimal cover. 
So $N \cup \{t\}$ is a generalized $(1,k)$-configuration and not a $(1,k)$-configuration.
The only assumption violated from the definition of $(1,k)$-configuration is $a(N) \leq b$. 
Furthermore, the corresponding generalized $(1,k)$-configuration inequality $\sum_{i=2}^5 x_i + 2x_6 \leq 3$ is facet-defining.
$\hfill\diamond$
\end{example}

We denote the separation problem associated with generalized $(1,k)$-configuration inequalities by \emph{G-CONFIG-SP}. 
The proof of the following \NP-hardness result about G-CONFIG-SP uses the same reduction technique as in the proof of Theorem~\ref{theo:CONFIG_reduction}. 

\begin{theorem}
\label{theo:G_CONFIG_reduction}
Problem G-CONFIG-SP is \NP-complete, even if $m=1$, or if $x^*$ is an extreme point solution to the LP relaxation~\eqref{eq:knapsack_relaxation}.
\end{theorem}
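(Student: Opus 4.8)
The plan is to mimic almost verbatim the reduction used in the proof of Theorem~\ref{theo:CONFIG_reduction}, transforming CI-SP to G-CONFIG-SP. Given an input $(A,d,c,x^*)\in(\Z^{m\times n}_+,\Z^m_+,\Z^n_+,[0,1]^n)$ to CI-SP, I would construct exactly the same extended instance $(A',d',c',y^*)$: append two new columns to $A'$, each equal to $Ae$ (the column-sum of $A$), set $d' = d + 2Ae$, keep $c'_j=c_j$ for $j\in[n]$, set $c'_{n+1}=c'_{n+2}=M$ with $M\geq(\pi^*)^\transp Ae$ large enough (using LP duality and a polynomial-size optimal dual $\pi^*$ as before) so that $y^*=(x^*,1,1)$ is optimal for the LP relaxation~\eqref{eq:lp_relaxation_config}. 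The only new content is verifying that the correspondence of yes-certificates still holds when ``$(1,k)$-configuration inequality'' is replaced by ``generalized $(1,k)$-configuration inequality''.

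The forward direction is even easier than before. If $C\subseteq[n]$ is a cover for some row $a^\transp x\le b$ with $x^*(C)>|C|-1$, then $C':=C\cup\{n+1,n+2\}$ is a cover for the corresponding row $a'^\transp y\le b':=b+2a^\transp e$, and $y^*(C')=2+x^*(C)>|C|+1=|C'|-1$, so the cover inequality of $C'$ separates $y^*$. Every cover inequality is a special case of a generalized $(1,k)$-configuration inequality (take $N=C'\setminus\{t\}$ for any $t\in C'$, $k=|N|$, so that $Q\cup\{t\}=C'$ is a cover and $n'=|N|-1$), hence we obtain a separating generalized $(1,k)$-configuration inequality. (Note we do not even need the ``minimal cover implies $(1,k)$-config'' dominance argument here.)

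For the reverse direction, suppose $N\cup\{t\}$ is a generalized $(1,k)$-configuration with respect to a row $a'^\transp y\le b'=b+2a^\transp e$ of $A'y\le d'$, with inequality $(n'-k+1)y_t+\sum_{i\in N}y_i\le n'$ violated by $y^*$. The key structural observation, exactly as in Theorem~\ref{theo:CONFIG_reduction}, is that any cover with respect to the row $(a^\transp,a^\transp e,a^\transp e)y\le b+2a^\transp e$ must contain both $n+1$ and $n+2$ (since dropping either leaves total weight $\le a^\transp e + (\text{terms summing to}\le a^\transp e)\cdots$ — more precisely $a'$-weight of any subset of $[n]\cup\{n+1\}$ or $[n]\cup\{n+2\}$ is at most $2a^\transp e<b+2a^\transp e$). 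Since $N\cup\{t\}$ is itself a cover (take $Q=N$, $|Q|=k$ is allowed when $k=|N|$; if $k<|N|$ we argue as below), and since for $k\le|N|-1$ every $k$-subset together with $t$ being a cover would force, via the same deletion argument used before, a contradiction with $n+1$ or $n+2$ lying in $N$, we again conclude $k=|N|$ and hence the generalized inequality reduces to the cover inequality $y^*(N\cup\{t\})>|N|=n'+1$, i.e. $n'=|N|-1$. Then $C:=(N\cup\{t\})\setminus\{n+1,n+2\}$ has $|C|=|N|-1$, is a cover for $a^\transp x\le b$ (because $a(C)+2a^\transp e>b+2a^\transp e$), and $x^*(C)=y^*(N\cup\{t\})-2>|N|-2=|C|-1$, so the CI from $C$ separates $x^*$.

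Combining both directions with Theorem~\ref{theo:dsp_hard} gives \NP-completeness, and as before $y^*=(x^*,1,1)$ is an extreme point of $\{y\in[0,1]^{n+2}:A'y\le d'\}$ whenever $x^*$ is an extreme point of $\{x\in[0,1]^n:Ax\le d\}$, and the construction preserves $m=1$. The main obstacle — and the only place where care beyond copying the previous proof is needed — is the reverse direction: I must re-examine the deletion argument showing $k=|N|$, because the definition of generalized $(1,k)$-configuration only requires $Q\cup\{t\}$ to be a \emph{cover} (not a \emph{minimal} cover) for each $k$-subset $Q$, and it drops the hypothesis $a(N)\le b$. I expect the argument to still go through: if $k\le|N|-1$ then for every $i'\in N$ the set $(N\cup\{t\})\setminus\{i'\}$ contains some $k$-subset $Q$ of $N$ with $t$, hence is a cover; choosing $i'\in\{n+1,n+2\}\cap N$ contradicts the fact that every cover of this row must contain both $n+1$ and $n+2$. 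One subtlety to double-check is that $\{n+1,n+2\}\subseteq N\cup\{t\}$, which follows because $N\cup\{t\}$ is a cover; and that at most one of $n+1,n+2$ can equal $t$, so $N$ indeed contains at least one of them. Also $n'$ is determined by the reduction to equal $|N|-1$ only after we know $k=|N|$ and the inequality separates; I should verify $n'\ge|N|-1$ always (since $N\ni n+1$ and with $x_{n+1}=1$ fixed the best bound on $\sum_{i\in N}x_i$ over $K$ is at most... ) — this needs a short check but presents no real difficulty.
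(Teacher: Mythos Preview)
Your reduction and the forward direction match the paper exactly. The gap is in the reverse direction, where you mishandle the parameter $n'$.

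After correctly deducing $k=|N|$, you write that the violated inequality ``reduces to the cover inequality $y^*(N\cup\{t\})>|N|=n'+1$, i.e.\ $n'=|N|-1$.'' This is internally inconsistent: if $n'=|N|-1$, the coefficient of $y_t$ is $n'-k+1=0$ and the inequality is $y^*(N)>|N|-1$, \emph{not} $y^*(N\cup\{t\})>|N|$. The form $y^*(N\cup\{t\})>|N|$ arises precisely when $n'=|N|$. These are two genuinely different cases, and the paper splits on them. Since every cover for the extended row contains both $n+1$ and $n+2$, one has $n'\ge|N|-1$, so $n'\in\{|N|-1,|N|\}$. When $n'=|N|$ your extraction $C=(N\cup\{t\})\setminus\{n+1,n+2\}$ works. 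When $n'=|N|-1$, however, $N$ itself is a cover (forcing $t\notin\{n+1,n+2\}$ and $\{n+1,n+2\}\subseteq N$), the violated inequality is $y^*(N)>|N|-1$, and one must take $C=N\setminus\{n+1,n+2\}$ instead; from $y^*(N)>|N|-1$ alone you cannot infer $y^*(N\cup\{t\})>|N|$ when $y^*_t=x^*_t<1$, so your single formula fails here.

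You flag $n'$ as needing a ``short check,'' but the issue is not just bounding $n'$ from below: the cover you extract and the inequality you use both depend on which of the two values $n'$ takes. The deletion argument for $k=|N|$, which you identify as the main obstacle, is actually fine as you wrote it; the real new content relative to Theorem~\ref{theo:CONFIG_reduction} is this case split on $n'$.
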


\begin{proof}
Using the same reduction as in the proof of Theorem~\ref{theo:CONFIG_reduction}, we want to show that:
given an input $(A,d,c,x^*)$ to CI-SP which is in $ (\Z^{m \times n}_+, \Z^m_+, \Z^n_+, [0,1]^n)$, we can find a corresponding input $(A', d', c', y^*)$ to G-CONFIG-SP such that there is a yes-certificate to CI-SP with input $(A,d,c,x^*)$ if and only if there is a yes-certificate to G-CONFIG-SP with input $(A', d', c', y^*)$.

Recall the construction in the proof of Theorem~\ref{theo:CONFIG_reduction}:
\begin{alignat*}{3}
A'_{ij} = A_{ij} \ \forall i \in [m], \forall j \in [n], &\qquad A'_{ik} = \sum_{j=1}^n A_{ij} \ \forall i \in [m], \forall k \in \{n+1,n+2\},\\
c'_j = c_j \ \forall j \in [n], &\qquad c'_{n+1} = c'_{n+2} = M,\\
d'_i = d_i + 2 \sum_{j=1}^n A_{ij} \ \forall i \in [m].
\end{alignat*}
As shown previously, when $M$ is chosen to be large enough, then $y^*:=(x^*,1,1)$ is an optimal solution to LP
\begin{equation}
\label{eq:lp_relaxation_config_g}
\max\{(c')^\transp y : A' y \leq d', y \in [0,1]^{n+2}\}
\end{equation}
if $x^*$ is an optimal solution to the linear program \eqref{eq:knapsack_relaxation}. 

Let $C \subseteq [n]$ be a cover with respect to the row constraint $a^\transp x \leq b$ of $A x \leq d$ such that the associated CI cuts off $x^*: x^*(C) > |C| - 1$. Let $C': = C \cup \{n+1,n+2\}$. By definition of the input $(A', d')$, $C'$ is a cover with respect to the row constraint $a'^\transp y = (a^\transp , a^\transp e, a^\transp e) y \leq b + 2 a^\transp e = b'$ within $A' y \leq d'$, and the associated CI cuts off $y^*: y^*(C') = 2 + x^*(C) > |C| + 1 = |C'| - 1$. 
As every cover inequality is also a generalized $(1,k)$-configuration inequality, we have obtained a generalized $(1,k)$-configuration inequality with respect to a row constraint within $A' y \leq d'$ that cuts off $y^*$.

To complete the proof, we must show that if $N \cup \{t\}$ is a generalized $(1,k)$-configuration with respect to some row constraint $a'^\transp y \leq b'$ within $A' y \leq d'$, and
$(n' - k + 1) y^*_t + \sum_{i \in N} y^*_i > n'$, for some $n'$ such that $\sum_{i \in N} x_i \leq n'$ is valid for $\{x \in \{0,1\}^{n+2}: a'^\transp y \leq b'\}$, then we can construct a cover $C$ with respect to the associated row constraint in $Ax \leq b$ such that the associated CI cuts off $x^*$.

By construction of $(A', d')$, we know that the row constraint $a'^\transp y \leq b'$ takes the form $(a^\transp , a^\transp e, a^\transp e) y \leq b + 2a^\transp e$, where $a^\transp x \leq b$ is a row constraint in $A x \leq b$. 
Observe that in the constraint $(a^\transp , a^\transp e, a^\transp e) y \leq b + 2a^\transp e$, any cover must contain both $n+1$ and $n+2$. By definition of generalized $(1,k)$-configuration, we know $\{n+1, n+2\} \subseteq N \cup \{t\}$.
Next, we continue our discussion by considering separately two cases.

\smallskip \noindent
\textbf{Case 1:} $t \notin \{n+1, n+2\}$. Then $\{n+1, n+2\} \subseteq N$. By definition of $(1,k)$-configuration, we know that, for any $Q \subseteq N$ with $|Q| = k$, $Q \cup \{t\}$ is a cover. 
This means $\{n+1, n+2\} \subseteq Q$ for any $Q \subseteq N$ with $|Q| = k$. Hence $k=|N|$, and we have 
\begin{equation}
\label{eq: g-config1}
(n' - |N| + 1) y^*_t + \sum_{i \in N} y^*_i > n'.
\end{equation}
Here, $n'$ is such that $\sum_{i \in N}x_i \leq n'$ is valid for $\{x \in \{0,1\}^{n+2}: a'^\transp y \leq b'\}$. 
Notice that $N \setminus \{n+2\}$ is not a cover to $a'^\transp y \leq b'$, so $n' \geq |N|-1$. 
If $n'=|N|-1$, then $N$ is a cover to $a'^\transp y \leq b'$, and \eqref{eq: g-config1} is simply $y^*(N) > |N|-1$. 
Note that $\{n+1, n+2\} \subseteq N$, and $a'^\transp = (a^\transp, a^\transp e, a^\transp e)$, $b'= b+2a^\transp e$, $y^*_{n+1}=y^*_{n+2} = 1$, hence $C:=N \setminus \{n+1,n+2\}$ is a cover to $a^\transp y \leq b$, and $y^*(C) = y^*(N)-2 > |N|-3 = |C| - 1$. 
If $n'=|N|$, then \eqref{eq: g-config1} gives $y^*_t + \sum_{i \in N} y^*_i > |N|$. 
Hence $C:=\{t\} \cup N \setminus \{n+1, n+2\}$ is a cover to $a^\transp y \leq b$, and $y^*(C) = y^*_t + \sum_{i \in N} y^*_i - y^*_{n+1} - y^*_{n+2} > |N|-2 = |C|-1$.

\smallskip \noindent
\textbf{Case 2:} $t \in \{n+1, n+2\}$. 
Since $a'_{n+1}=a'_{n+2} = a^\transp e$, without loss of generality, we assume $t=n+2$. 
Because for any $Q \subseteq N$ with $|Q| = k$, $Q \cup \{t\}$ is a cover, and any cover to $a'^\transp x \leq b'$ should contain both $n+1$ and $n+2$, we know that $k = |N|$ and $n+1 \in N$. Moreover, since $t = n+2 \notin N$, then $N$ is not a cover to $a'^\transp x \leq b'$. Hence $n' = |N|$. Therefore, the separating generalized $(1,k)$-configuration inequality gives:
\begin{equation}
\label{eq: g-config2}
y^*_{n+2} + \sum_{i \in N} y^*_i > |N|.
\end{equation}
Here $N \cup \{n+2\}$ is a cover to $a'^\transp \leq b'$. 
By definition of $a', b',$ and $y^*$, we know that $C:=N \setminus \{n+1\}$ is a cover to $a^\transp \leq b$, whose associated CI separates $x^*$. 

We have thereby shown that there is a yes-certificate to CI-SP with input $(A,d,c,x^*)$ if and only if there is a yes-certificate to G-CONFIG-SP with input $(A', d', c', y^*)$.
Together with Theorem~\ref{theo:dsp_hard}, our proof establishes that G-CONFIG-SP is \NP-complete, even if $m=1$, or if $x^*$ is an extreme point to the LP relaxation~\eqref{eq:knapsack_relaxation}. 
To conclude the proof, it suffices to realize that the input $y^* = (x^*,1, 1)$ for G-CONFIG-SP will be an extreme point of $\{y \in [0,1]^{n+2} : A'y \leq d'\}$ if $x^*$ is an extreme point of $\{x \in [0,1]^{n} : A x \leq d\}$.
\qed
\end{proof}

Since the definition of generalized $(1,k)$-configuration inequalities is simpler than the definition of $(1,k)$-configuration inequalities, from Theorem~\ref{theo:poly_solvable_config}, one can expect that G-CONFIG-SP is polynomially solvable when the fractional support of $x^*$ has relatively small size.

\begin{theorem}
\label{theo:poly_solvable_g_config}
Let $x^*$ be the input solution to G-CONFIG-SP. If $|\{i \in [n]: x^*_i \in (0,1)\}| \leq \alpha$, then a separating generalized $(1,k)$-configuration inequality can be obtained in $O(n^2 \cdot 2^\alpha \cdot \log n \cdot m)$ time, if one exists.
\end{theorem}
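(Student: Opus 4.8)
The plan is to follow the same template as the proof of Theorem~\ref{theo:poly_solvable_eci}. Fix one of the $m$ knapsack constraints $a^\transp x \le b$ and assume, after an $O(n\log n)$ sort, that $a_1 \le \cdots \le a_n$. We loop over the candidate apex index $t \in [n]$ and over all subsets $N_f$ of the fractional support $\{i : x_i^* \in (0,1)\}$; since this support has size at most $\alpha$, there are $O(n \cdot 2^\alpha)$ such pairs per constraint. For a fixed pair $(t, N_f)$ we look for a separating generalized $(1,k)$-configuration $N \cup \{t\}$ whose fractional-valued elements are exactly $N_f$, and the claim that drives the algorithm is that it suffices to try the single candidate $N := N_f \cup \{i \in [n]\setminus\{t\} : x_i^* = 1\}$, together with the best admissible $k$.

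To justify this, first note (by the same sorted-prefix argument used in Lemma~\ref{lem: equi_config}, but without the minimality requirement) that $N \cup \{t\}$ is a generalized $(1,k)$-configuration with $1 \le k \le |N|$ iff $a_t + a(N_{[k]}) > b$, so an admissible $k$ exists iff $a_t + a(N) > b$, and the smallest admissible one is $k_{\min}(N) := \min\{k \ge 1 : a_t + a(N_{[k]}) > b\}$. Writing $n'(N) = \max\{j : a(N_{[j]}) \le b\}$ for the tight right-hand side, the violation of the associated inequality at $x^*$ equals $(n'(N) - k + 1)x_t^* + x^*(N) - n'(N)$, which is non-increasing in $k$, so we should use $k = k_{\min}(N)$. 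Now insert one further unit coordinate $i$ (with $x_i^* = 1$) into $N$: a one-element insertion changes $n'$ by some $\delta_n \in \{0,1\}$ and $k_{\min}$ by some $\delta_k \in \{0,1\}$ (the $k$-smallest prefix sums do not increase, and $n'$ can grow by at most one), and it changes the violation by exactly $1 - \delta_k x_t^* - \delta_n(1 - x_t^*) \ge 1 - x_t^* - (1 - x_t^*) = 0$; it also only helps the admissibility condition $a_t + a(N) > b$. Hence the ``all-ones'' choice of $N$ is admissible whenever any choice with the same $N_f$ is, and it maximizes the violation among them, so nothing is lost. Separating cover inequalities are themselves a special case (taking $k = |N|$) and are found by the same search, so no separate cover pre-processing is needed, unlike in Theorem~\ref{theo:poly_solvable_config}.

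With $N$ pinned down, a single pass over $i = 1, \dots, n$ — maintaining the running count and running sum of the elements of $N$ seen so far, and testing membership $i \in N_f$ in $O(1)$ time after marking $N_f$ in a length-$n$ array — computes $a(N)$, the relevant sorted prefix sums, $n'(N)$, $k_{\min}(N)$, $x^*(N)$, and the violation test, all in $O(n)$ time; if the test passes we have an explicit separating inequality. Over all pairs and all constraints this is $O\big(m(n\log n + n^2 2^\alpha)\big)$, which is no more than the claimed $O(n^2 \cdot 2^\alpha \cdot \log n \cdot m)$. A few degenerate situations ($a_t > b$, forcing $k_{\min} = 1$; $N = \emptyset$; $N = [n]\setminus\{t\}$, still a proper subset of $[n]$) need a line of checking but cause no real trouble.

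The step I expect to be the main obstacle is the ``all-ones is optimal'' reduction — in particular, pinning down precisely how $n'(N)$ and $k_{\min}(N)$ respond to inserting a single coordinate into $N$, and confirming that the resulting change in the violation is always nonnegative. This is exactly the point where the simpler definition of generalized $(1,k)$-configurations (no minimal-cover condition, no $a(N) \le b$ condition) is exploited: it is what lets us avoid the delicate two-case analysis of Theorem~\ref{theo:poly_solvable_config} and collapse the exponentially many choices of which unit coordinates to include into a single canonical one.
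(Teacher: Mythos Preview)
Your approach is essentially the paper's: enumerate pairs $(t,N_f)$, argue that the canonical choice $N = N_f \cup \{i \in [n]\setminus\{t\} : x_i^* = 1\}$ suffices, and for this $N$ compute in linear time the smallest admissible $k$ and the tight $n'$ and test the violation. The paper's justification for the canonical $N$ is slightly more direct --- when inserting a unit coordinate it simply passes to the pair $(k+1,\,n'+1)$ and observes that the violation is preserved exactly, rather than tracking how the \emph{optimal} $k_{\min}$ and $n'$ shift --- but your $\delta_k,\delta_n \in \{0,1\}$ analysis is correct and reaches the same conclusion.

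There is one genuine gap. You argue only that \emph{inserting} a unit coordinate never decreases the violation; you never justify \emph{removing} indices $i$ with $x_i^* = 0$. The original separating $N$ may contain such indices, and after inserting all missing units you arrive at $N \cup \{\text{units}\}$, which still carries those zeros and is not yet your canonical set. The paper handles this explicitly: after deleting a zero coordinate it keeps the same $k$ and the same $n'$, checks that $|N\setminus\{i\}| \ge k$ (otherwise $|N|=k$, and combining $x^*(N\setminus\{i\}) \le |N|-1$ with $x_t^* \le 1$ contradicts the assumed violation), and notes the violation is unchanged since $x_i^* = 0$. Alternatively you can run your own computation in reverse: deleting a zero coordinate yields $\delta_k,\delta_n \in \{-1,0\}$ and the violation changes by $-\delta_k x_t^* - \delta_n(1-x_t^*) \ge 0$, with admissibility of $k_{\min}$ for the smaller set following from the same $|N\setminus\{i\}| \ge k$ check. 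Once this step is added, your argument is complete and matches the paper's.
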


\begin{proof}
Let $N \cup \{t\}$ be a generalized $(1,k)$-configuration with respect to some row $a^\transp x \leq b$, whose corresponding inequality \eqref{eq: generalized_config} separates $x^*$:
\begin{equation}
\label{eq: fractional_support1}
(n'-k+1) x^*_t + x^*(N) > n'.
\end{equation}
We assume that $a_1 \leq a_2 \leq \ldots \leq a_n$. 
\begin{claim}
There exists $N^* \subseteq [n] \setminus \{t\}$ and $k' \in \N$, such that $N^* \cup \{t\}$ is a generalized $(1,k')$-configuration whose corresponding inequality \eqref{eq: generalized_config} separates $x^*$, and $\{i \in [n] \setminus \{t\}: x^*_i = 1\} \subseteq N^*$, $\{i \in N^*: x^*_i = 0\} = \emptyset$.
\end{claim}
\begin{cpf}
If there exists $i' \in [n] \setminus \{t\}$ with $x^*_{i'} = 1$ and $i' \notin N$, then for $N':=N \cup \{i'\}$, $N' \cup \{t\}$ is a generalized $(1,k+1)$-configuration. Since $x(N) \leq n'$ is valid for $\{x \in \{0,1\}^n: a^\transp x \leq b\}$, then $x(N') = x(N)+x_{i'} \leq n'+1$ is also valid for $\{x \in \{0,1\}^n: a^\transp x \leq b\}$. Therefore, we obtain a generalized $(1,k+1)$-configuration inequality $(n'-k+1) x_t + x(N') \leq n'+1$, which cuts off $x^*$ because of \eqref{eq: fractional_support1} and $x^*_{i'} = 1$. 

If there exists $i'' \in N$ such that $x^*_{i''} = 0$, then for $N'':=N \setminus \{i''\},$ we have $|N''| \geq k$. 
This is because, otherwise, $|N| = k$, and \eqref{eq: fractional_support1} gives $(n'-|N|+1) x^*_t + x^*(N'') > n'.$ 
Note that $x^*_t \leq 1$ and $x^*(N'') \leq |N''| = |N| -1$, thus we have $n' < (n'-|N|+1) x^*_t + x^*(N'') \leq n'-|N|+1 + |N|-1 = n'$, which gives a contradiction. 
Therefore, $N'' \cup \{t\}$ is a generalized $(1,k)$-configuration, whose corresponding inequality \eqref{eq: generalized_config} is $(n'-k+1) x_t + x(N'') \leq n'$, and which cuts off $x^*$ because of \eqref{eq: fractional_support1} and $x^*_{i''} = 0$. 

Applying the above arguments recursively, eventually we will end up with a set $N^* \subseteq [n] \setminus \{t\}$ and $k' \geq k$, such that $N^* \cup \{t\}$ is a generalized $(1,k')$-configuration whose corresponding inequality \eqref{eq: generalized_config} separates $x^*$.
\end{cpf}
From the above claim, without loss of generality, we can assume that $\{i \in [n] \setminus \{t\}: x^*_i = 1\} \subseteq N$ and $\{i \in N: x^*_i = 0\} = \emptyset$.
Therefore, G-CONFIG-SP can be solved by the following procedure:
\begin{enumerate}
\item Arbitrarily pick $t \in [n]$ and set $N_f \subseteq \{i \in [n] \setminus \{t\}: x^*_i \in (0,1)\}$. 
\item Let $N:=\{i \in [n] \setminus \{t\}: x^*_i = 1\} \cup N_f$. If $a(N) + a_t \leq b$, then stop.
\item Let $k$ be the smallest integer number such that $a_t + a(N_{[k]}) > b$, and let $n'$ be the smallest integer number such that $x(N) \leq n'$ is valid for $\{x \in \{0,1\}^n: a^\transp x \leq b\}$. Check if \eqref{eq: fractional_support1} holds.
\item If the answer to the previous check is yes for some $t$ and $N_f$, then the corresponding $N \cup \{t\}$ provides a yes-certificate to G-CONFIG-SP, since the associated generalized $(1,k)$-configuration inequality separates $x^*$; 
If the answer is no for all $t$ and $N_f$, then $x^*$ cannot be separated by any generalized $(1,k)$-configuration inequality from the knapsack constraint $a^\transp x \leq b$.
\end{enumerate}

Here step~2 runs in $O(n)$ time, step~3 runs in $O(n \log n)$ time.
Since $|\{i \in [n]: x^*_i \in (0,1)\}| \leq \alpha$, we have 
$$
|\left\{N_f: N_f \subseteq \{i \in [n] \setminus \{t\}: x^*_i \in (0,1)\} \right\}| \leq 2^\alpha.
$$
So the above procedure can be implemented in $O(n^2 \cdot 2^\alpha \cdot \log n)$ time, 
and we complete the proof by applying the above argument over all knapsack constraint $a^\transp x \leq b$. 
\qed
\end{proof}

Compared with the separating scheme in Theorem~\ref{theo:poly_solvable_config}, the above theorem suggests that it is in fact easier to separate the generalized $(1,k)$-configuration inequality, compared with the standard $(1,k)$-configuration inequality, even though the later one is a special case.
For generalized $(1,k)$-configuration inequalities, we also directly obtain the following corollary.

\begin{corollary}
\label{cor: g_config}
If $m$ is polylogarithmic in $n$ and $x^*$ is an extreme point solution to \eqref{eq:knapsack_relaxation}, then a separating generalized $(1,k)$-configuration inequality can be obtained in polynomial time, if one exists. 
\end{corollary}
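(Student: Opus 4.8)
The plan is to obtain Corollary~\ref{cor: g_config} as an immediate consequence of Theorem~\ref{theo:poly_solvable_g_config}, in exactly the way that Corollary~\ref{cor: 1} is derived from Theorem~\ref{theo:poly_solvable_eci}. The only ingredient needed beyond Theorem~\ref{theo:poly_solvable_g_config} is the classical polyhedral fact that an extreme point $x^*$ of $\{x \in [0,1]^n : Ax \leq d\}$ has at most $m$ fractional coordinates. First I would record this: the polyhedron is described by the $m$ inequalities $Ax \leq d$ together with the $2n$ box constraints $0 \leq x \leq e$; at an extreme point $n$ linearly independent constraints among these are active, and for each index $i$ at most one of $x_i \geq 0$, $x_i \leq 1$ can be active, so at least $n-m$ coordinates of $x^*$ are pinned to $0$ or $1$. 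Hence $|\{i \in [n] : x^*_i \in (0,1)\}| \leq m$.

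With this bound in hand, the second step is simply to invoke Theorem~\ref{theo:poly_solvable_g_config} with $\alpha = m$: it returns a separating generalized $(1,k)$-configuration inequality, or certifies that none exists, in $O(n^2 \cdot 2^m \cdot \log n \cdot m)$ time. The final step is to observe that when $m$ is polylogarithmic in $n$ the factor $2^m$ is polynomially bounded in $n$, so the whole running time is polynomial in the input size, which proves the corollary.

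There is essentially no obstacle to overcome here — the statement is a direct corollary of an already-established theorem, and the only auxiliary fact, namely that a basic feasible solution of an LP with $m$ inequality constraints (aside from the box constraints) has at most $m$ fractional components, is entirely standard. Accordingly I expect the proof to consist of a single short paragraph, mirroring the proof of Corollary~\ref{cor: 1} almost verbatim.
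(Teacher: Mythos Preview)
Your proposal is correct and is exactly the approach the paper takes: the corollary is stated with no explicit proof, merely the remark that it follows directly from Theorem~\ref{theo:poly_solvable_g_config}, in the same way Corollary~\ref{cor: 1} follows from Theorem~\ref{theo:poly_solvable_eci}. Your extreme-point argument and the $O(n^2 \cdot 2^m \cdot \log n \cdot m)$ running-time bound match the intended derivation precisely.
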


\section{Weight Inequalities}

In this section, we show that WI-SP is \NP-hard, and we present special cases where it can be solved in polynomial time.
For a pack $P$ of a given knapsack constraint $a^\transp x \leq b$, we denote by $C(P): = \{i \in [n] \setminus P : a_i > r(P)\}$.
With this notation, the WI associated with $P$ takes the form
\begin{equation*}
\sum_{i \in P} a_i x_i + \sum_{j \in C(P)} (a_j-r(P)) x_j \leq a(P),
\end{equation*}
where we remind the reader that $r(P): = b - a(P)$.
We will need the following auxiliary result. 

\begin{lemma}
\label{lem:wi_sp}
Let $(a,b) \in \Z^{n+1}_+$ with $a([n])/b \notin \Z$, and let $x^*_1 = \ldots = x^*_n = b/a([n])$. 
Then there exists a pack $P$ of $a^\transp x \leq b$ whose associated WI separates $x^*$ if and only if there exists a pack $P'$ of $a^\transp x \leq b$ such that $r(P') > 0$, $P' \cup C(P') = [n]$, and $|C(P')| = \lfloor a([n])/b \rfloor$.
\end{lemma}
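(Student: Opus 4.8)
The plan is to analyze the structure of a pack $P$ whose WI cuts off the uniform point $x^*$, and show that such a pack can always be ``pushed'' to one of the canonical form described. Write $\lambda := b/a([n]) \in (0,1)$ (note $1/\lambda = a([n])/b \notin \Z$), so $x^*_i = \lambda$ for all $i$. For a pack $P$, the WI is violated at $x^*$ precisely when
\begin{equation*}
\lambda \Bigl( a(P) + \sum_{j \in C(P)} (a_j - r(P)) \Bigr) > a(P),
\end{equation*}
i.e. $\lambda \bigl( a(P) + a(C(P)) - |C(P)| \, r(P) \bigr) > a(P)$. Using $a(P) = b - r(P)$ and $\lambda b = b^2 / a([n])$, a little rearrangement turns this into a clean inequality relating $r(P)$, $|C(P)|$, and $a([n]) - a(P) - a(C(P)) = a([n] \setminus (P \cup C(P)))$, the total weight of the ``uncovered'' items. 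The first step is to carry out this algebra and record the violation condition in a form where it is manifest that (a) enlarging $C(P)$ to all of $[n] \setminus P$ (i.e. absorbing uncovered items, which only happens to be possible item-by-item if their weight exceeds $r(P)$ — but one can instead move them \emph{into} $P$) can only help or be neutral, and (b) the quantity $|C(P)|$ is forced to equal $\lfloor a([n])/b \rfloor$.

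For the ``if'' direction, given $P'$ with $r(P')>0$, $P' \cup C(P') = [n]$ and $|C(P')| = \lfloor 1/\lambda \rfloor =: q$, I would simply plug into the violation condition: with no uncovered items the condition should reduce to something like $\lambda(b + a(C(P')) - q\,r(P')) > b - r(P')$, and since $a(C(P')) = a([n]) - a(P') = a([n]) - b + r(P')$, everything collapses to an inequality equivalent to $q < 1/\lambda < q+1$, which holds by definition of $q$ together with $1/\lambda \notin \Z$. So the ``if'' direction is a direct substitution.

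For the harder ``only if'' direction, start from an arbitrary violating pack $P$. First, I claim $r(P) > 0$: if $r(P) = 0$ then $C(P) = \emptyset$ and the WI is just $a^\transp x \le a(P) = b$, which holds at $x^*$ since $a^\transp x^* = \lambda a([n]) = b$; no violation. Next, handle uncovered items $U := [n] \setminus (P \cup C(P))$: each $j \in U$ has $a_j \le r(P)$, so $P \cup \{j\}$ is still a pack, with smaller residual; I would argue (via the violation condition's monotonicity in the uncovered-weight term) that moving all of $U$ into $P$ one at a time keeps the WI violated — the key subtlety is that moving $j$ into $P$ changes $r(P)$, hence possibly changes $C(P)$ (some items may newly exceed, or cease to exceed, the residual), so I need the violation condition written in a way robust to these recomputations, ideally phrased purely in terms of $P$, and then check the inequality is preserved. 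After this we may assume $P \cup C(P) = [n]$ and $r(P) > 0$. Finally, with $U = \emptyset$, substitute $a(C(P)) = a([n]) - a(P) = a([n]) - b + r(P)$ back into the violation condition; it should reduce to an inequality of the shape $|C(P)| \, r(P) < \text{(something)} < (|C(P)| + 1) \, r(P)$ forcing $|C(P)| = \lfloor a([n])/b \rfloor$ exactly (here the hypothesis $a([n])/b \notin \Z$ is what rules out the boundary case). That pins down $P' := P$.

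\textbf{Main obstacle.} The delicate point is the second step of the ``only if'' direction: absorbing uncovered items into the pack while the cover set $C(P)$ is simultaneously being recomputed. I expect the cleanest route is to first massage the violation inequality into a form that depends on $P$ only through $a(P)$, $r(P)$, and the partition of $[n] \setminus P$ into $\{j : a_j > r(P)\}$ versus the rest — and then observe that adding an uncovered item $j$ (with $a_j \le r(P)$) to $P$ decreases both $a([n] \setminus P)$ and $r(P)$ by $a_j$, and check the net effect on the inequality is favorable; items moving between ``covered'' and ``uncovered'' status as $r(P)$ shrinks need to be tracked, but since $r(P)$ only decreases, once an item is uncovered it can only stay uncovered or... actually it can become covered, so some care is genuinely needed. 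An alternative that may sidestep this: show directly that among all violating packs, one with $|U|$ minimal must have $U = \emptyset$, by a short exchange argument. I would present whichever of these yields the shorter verification.
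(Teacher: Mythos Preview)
Your ``if'' direction is the paper's computation. The ``only if'' direction, however, diverges from the paper's argument and has a real gap in step~3.

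The gap: once you have $P \cup C(P) = [n]$ and $r(P) > 0$, the violation condition collapses (after your substitution $a(C(P)) = a([n]) - b + r(P)$) to precisely $|C(P)| < a([n])/b$ --- only the \emph{upper} bound. No two-sided sandwich ``$|C(P)|\,r(P) < \cdots < (|C(P)|+1)\,r(P)$'' emerges from the violation inequality alone, so your step~3 as written does not pin down $|C(P)|$. The missing lower bound $|C(P)| > a([n])/b - 1$ must come from somewhere else. (It can be recovered from $a_j \leq b$ for all $j$, via $|C(P)|\cdot b \geq a(C(P)) = a([n]) - b + r(P) > a([n]) - b$; but you neither state this hypothesis nor invoke it, and it is not how the paper proceeds.) Combined with your acknowledged difficulty in step~2 --- where absorbing an uncovered item can genuinely shrink the violation, since $f(P\cup\{i'\}) - f(P)$ may be negative when $|C(P)|$ is small --- the ``only if'' direction is not close to complete.

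The paper sidesteps both obstacles at once by an extremal choice: among all violating packs, take $P$ with \emph{maximal} violation $f(P) := \sum_{i\in P} a_i x^*_i + \sum_{j\in C(P)}(a_j - r(P))x^*_j - a(P)$. A short computation gives $f(P) - f(P\setminus\{i'\}) \leq a_{i'}\bigl((|C(P)|+1)\lambda - 1\bigr)$ for any $i'\in P$, so maximality yields $(|C(P)|+1)\lambda \geq 1$, i.e.\ the lower bound on $|C(P)|$. Once $|C(P)| = \lfloor a([n])/b\rfloor$ is established, the companion bound $f(P\cup\{i'\}) - f(P) \geq a_{i'}\bigl((|C(P)|+1)\lambda - 1\bigr) > 0$ for any uncovered $i'$ contradicts maximality, forcing $U = \emptyset$. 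No iterative absorption, and no tracking of how $C(P)$ shifts as $r(P)$ decreases. Your own ``alternative'' suggestion (an exchange argument on a minimal-$|U|$ violator) is a step toward this, but the right extremal quantity is $f(P)$ itself, because its first-order conditions deliver both the lower bound on $|C(P)|$ and the emptiness of $U$.

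Minor remark: if $r(P) = 0$ then $C(P) = [n]\setminus P$ (assuming positive weights), not $\emptyset$; your conclusion that the WI then degenerates to $a^\transp x \leq b$ and is not strictly violated is nonetheless correct.
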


\begin{proof}
First, assume that there exists a pack $P'$ of $a^\transp x \leq b$ such that $r(P')>0$, $P' \cup C(P') = [n]$, and $|C(P')| = \lfloor a([n])/b \rfloor$. 
We have
\begin{align*}
    \sum_{i \in P'} a_i x_i^* + \sum_{j \in C(P')} (a_j - r(P')) x^*_j & = \sum_{i \in P' \cup C(P')} a_i x^*_i - r(P') \sum_{j \in C(P')} x^*_j \\
    & = \sum_{i \in [n]} a_i x^*_i - r(P') \sum_{j \in C(P')} x^*_j \\
    & = b - r(P') \cdot |C(P')| \cdot \frac{b}{a([n])} \\
    & > b - r(P') \\
    & = a(P'),
\end{align*}
where the inequality holds because $r(P')>0$, $a([n])/b \notin \Z$, and $|C(P')| = \lfloor a([n])/b \rfloor$.
Therefore, we know that the WI associated with pack $P'$ separates $x^*$.

Next, assume that there exists a pack $P$ of $a^\transp x \leq b$ whose associated WI separates $x^*$. 
Namely: 
$$
f(P): = \sum_{i \in P}a_i x^*_i + \sum_{j \in C(P)} (a_j - r(P)) x^*_j - a(P) > 0.
$$ 
Without loss of generality, we assume that $P$ has the largest value $f(P)$ among all packs of the knapsack constraint $a^\transp x \leq b$.
By re-arranging the terms and using some basic algebra, we have that $f(P) > 0$ implies $r(P) > 0$ and
\begin{equation*}
    \sum_{j \in C(P)} x^*_j < \frac{\sum_{i \in P \cup C(P)} a_i x^*_i - a(P)}{r(P)}.
\end{equation*}
Since $\sum_{i \in P \cup C(P)} a_i x^*_i \leq b$,
we obtain
\begin{equation*}
\sum_{j \in C(P)} x^*_j 
< \frac{b - a(P)}{r(P)} 
= 1.
\end{equation*}
Replacing $\sum_{j \in C(P)} x^*_j = |C(P)| \cdot b/a([n])$, we have shown
\begin{equation}
\label{eq:wi_must}
    |C(P)| < \frac{a([n])}{b}.
\end{equation}

Now let $i' \in P$.
Note that we have $r(P \setminus \{i'\}) = r(P) + a_{i'}$, so we obtain
\begin{align*}
f(P) - f(P \setminus \{i'\}) & = a_{i'} \left(\sum_{j \in C(P \setminus \{i'\})} x^*_j + x^*_{i'} - 1 \right) + \sum_{j \in C(P) \setminus C(P \setminus \{i'\})} \left(a_j - r(P)\right) x^*_j \\
& \in \left[a_{i'} \left( \sum_{j \in C(P \setminus \{i'\})} x^*_j + x^*_{i'} - 1 \right), \ a_{i'} \left( \sum_{j \in C(P)} x^*_j + x^*_{i'} - 1 \right)\right],
\end{align*}
where the last relation holds because $a_j > r(P)$ for every $j \in C(P)$ and $a_j - r(P) = a_j - r(P \setminus \{i'\}) + a_{i'} \leq a_{i'}$ for every $j \notin C(P \setminus \{i'\})$. 
Since $P \setminus \{i'\}$ is clearly also a pack, our maximality assumption on $f(P)$ implies that $f(P) \geq f(P \setminus \{i'\})$. 
Hence we have $a_{i'} \left( \sum_{j \in C(P)} x^*_j + x^*_{i'} - 1 \right) \geq 0$. 
This implies that $\sum_{j \in C(P)} x^*_j + x^*_{i'} \geq 1$ for any $i' \in P$. 
Since $x^*_1 = \ldots = x^*_n = b/a([n])$, we obtain
\begin{equation*}
\label{eq:wi_1}
|C(P)| \geq \frac{a([n])}{b} - 1.    
\end{equation*}
Combined with \eqref{eq:wi_must} and the assumption that $a([n])/b \notin \Z$, we have:
\begin{equation*}
\label{eq:wi_2}
    |C(P)| = \left\lfloor \frac{a([n])}{b} \right\rfloor.
\end{equation*}

To complete the proof, we only need to show that $P \cup C(P) = [n]$. 
If not, there exists $i' \in [n] \setminus (P \cup C(P))$, such that $P \cup \{i'\}$ remains a pack. 
Hence
\begin{align*}
f(P \cup \{i'\}) - f(P) & = a_{i'} \left(\sum_{j \in C(P)} x^*_j + x^*_{i'} - 1 \right) + \sum_{j \in C(P \cup \{i'\}) \setminus C(P)} \left(a_j - r(P \cup \{i'\})\right) x^*_j \\
& \in \left[a_{i'} \left( \sum_{j \in C(P)} x^*_j + x^*_{i'} - 1 \right), \ a_{i'} \left( \sum_{j \in C(P \cup \{i'\})} x^*_j + x^*_{i'} - 1 \right)\right].
\end{align*}
Since $|C(P)| = \lfloor a([n])/b \rfloor$ and $a([n])/b \notin \Z$, we know that $\sum_{j \in C(P)} x^*_j + x^*_{i'} > 1$, which implies that 
$f(P \cup \{i'\}) > f(P)$. 
This contradicts the maximality assumption on $f(P)$ and the fact that $P \cup \{i'\}$ is a pack. 
We have thereby shown $P \cup C(P) = [n]$.
\qed
\end{proof}

To prove that the separation problem WI-SP is \NP-hard, we establish a reduction from the \emph{Subset Sum Problem (SSP)} to WI-SP. \\

\noindent \textbf{Problem SSP}\\
\textbf{Input: }$\alpha \in \Z^n_+$ and $w \in \Z_+$.\\
\textbf{Question: }Is there a subset $S \subseteq [n]$ such that $\alpha(S) = w$?\\

The SSP is among Karp's 21 \NP-complete problems \cite{karp1972reducibility}.
It is simple to check that SSP is \NP-complete even if $w > \max(\alpha)$.
We are now ready to prove that WI-SP is \NP-hard.

\begin{theorem}
\label{theo:WI_complexity}
Problem WI-SP is \NP-complete, even if $m=1$, or if $x^*$ is an extreme point solution to the LP relaxation \eqref{eq:knapsack_relaxation}.
\end{theorem}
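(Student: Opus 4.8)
The plan is to establish NP-hardness of WI-SP by a polynomial reduction from SSP, taking the point to be separated to be the uniform point of Lemma~\ref{lem:wi_sp}; NP-completeness then follows because, as already observed in the excerpt, WI-SP is in NP.

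Given an SSP instance $(\alpha,w)$ with $w>\max(\alpha)$, I would build a single knapsack constraint $a^\transp x\le b$ over a ground set consisting of the SSP items together with a few auxiliary items, and set $x^*:=(b/a([n]))\,e$ after a mild scaling that guarantees $a([n])/b\notin\Z$ (so that Lemma~\ref{lem:wi_sp} is applicable). By Lemma~\ref{lem:wi_sp}, for this $x^*$ the existence of a separating WI is equivalent to the existence of a pack $P'$ with $r(P')>0$, $P'\cup C(P')=[n]$ and $|C(P')|=\lfloor a([n])/b\rfloor$. Writing $Q:=[n]\setminus P'$ and $\tau:=a([n])-b$, the conditions $r(P')>0$ and $P'\cup C(P')=[n]$ say precisely that $Q$ is \emph{inclusion-minimal} with respect to the property $a(Q)>\tau$, i.e.\ $a(Q)>\tau$ while $a(Q)-a_j<\tau$ for every $j\in Q$; and then one additionally needs $|Q|=\lfloor a([n])/b\rfloor$. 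So the task is reduced to: engineer $a$ and $b$ so that a minimal set $Q$ with $a(Q)>\tau$ and $|Q|=\lfloor a([n])/b\rfloor$ exists if and only if the SSP instance has a solution.

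The construction has to do two things simultaneously. First, $\tau$ and the minimality requirement must force the ``SSP part'' of $Q$ to have weight exactly $w$: this is arranged by making the smallest weight that can occur inside $Q$ very small, so that the admissible window $(\tau,\ \tau+\min_{j\in Q}a_j)$ for $a(Q)$ collapses to a single attainable value, and by pre-scaling the $\alpha_i$ (e.g.\ to be even) to remove off-by-one ambiguities. Second, the forced cardinality $\lfloor a([n])/b\rfloor$ must equal the common size of the ``good'' sets $Q$ arising from SSP solutions and must differ from the size of every spurious minimal set. To control cardinalities I would first replace $(\alpha,w)$ by an equivalent SSP instance in which every solution $S$ has the same cardinality $s_0$ (a standard uniform shift of all weights by a large constant together with pure-filler items; note $s_0\ge 2$ since $w>\max(\alpha)$), and then add auxiliary ``blocking'' items that every admissible $Q$ is forced to contain up to at most one exception, so that $|Q|$ is rigidly tied to $s_0$ and $b$ can be tuned to match $\lfloor a([n])/b\rfloor$ to it while keeping it different from the sizes of the degenerate minimal sets. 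The forward direction is then a direct check: for an SSP solution $S$, the set $Q=(\text{blocking items})\cup S$ is inclusion-minimal with $a(Q)>\tau$ and has the prescribed cardinality, so by Lemma~\ref{lem:wi_sp} the WI of $P'=[n]\setminus Q$ separates $x^*$. For the reverse direction I would take a separating WI, apply Lemma~\ref{lem:wi_sp}, and run a short case analysis on how many auxiliary items the resulting $Q$ uses to conclude that $Q$ must be of the above form, so that $Q$ restricted to the SSP items is a solution.

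This yields the ``$m=1$'' assertion. For the ``$x^*$ is an extreme point'' assertion I would keep the one-row instance on variables $x_1,\dots,x_n$ and adjoin variables $z_1,\dots,z_{n-1}$ with the constraints $x_i+z_i\le 1$ and $x_{i+1}+z_i\le 1$ for $i\in[n-1]$, together with an objective $c$ taken as a strictly positive combination of the gradients of these $2n-1$ constraints at the point $(x^*,\ e-x^*)$ (such a $c$ has positive integer entries). One checks that these $2n-1$ tight constraints are linearly independent, so $(x^*,e-x^*)$ is the unique LP optimum and hence an extreme point; moreover every weight inequality coming from one of the new constraints is either trivial or is exactly that constraint, which holds with equality at $(x^*,e-x^*)$, so the new rows contribute no separating WI, and every WI coming from the row $a^\transp x\le b$ restricts to a WI of the original one-row instance evaluated at $x^*$. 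Hence separating a WI for the augmented instance is equivalent to separating one for the one-row instance, i.e.\ to the SSP instance being a yes-instance. Combining both cases with membership in NP gives NP-completeness.

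The main obstacle is the construction described in the third paragraph: the equality $|C(P')|=\lfloor a([n])/b\rfloor$ forced by Lemma~\ref{lem:wi_sp} couples the \emph{cardinality} of the complement set $Q$ to the \emph{weights}, so one must design the auxiliary items and choose $b$ so that this cardinality bookkeeping is at once consistent with the sets $Q$ encoding SSP solutions and inconsistent with all spurious minimal sets, while also pinning the relevant weight to $w$; once the construction is in place, both directions of the correctness proof are routine.
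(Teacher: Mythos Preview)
Your high-level plan for the $m=1$ case---reduce from SSP, take $x^*$ to be the uniform point $(b/a([n]))\,e$, and invoke Lemma~\ref{lem:wi_sp}---is exactly the paper's approach. The gap is that you do not actually give the construction: transforming SSP to an equal-cardinality variant and then adding unspecified ``blocking items'' is a plan, not a reduction, and you yourself flag it as ``the main obstacle.'' The paper's construction avoids your detour entirely. It uses $2n+2$ items: $a_i=\alpha_i+2$ for $i\in[n]$, one very large item $a_{n+1}$, and $n+1$ filler items of weight $2$; the capacity is $b=w+2n+3$, and the parameters are chosen so that $a([2n+2])/b=n+1+1/(w+2n+3)\notin\Z$ and $\lfloor a([2n+2])/b\rfloor=n+1$. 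Any pack $P$ meeting the conditions of Lemma~\ref{lem:wi_sp} is then forced to satisfy $|P|=n+1$ and (because some weight-$2$ filler must land in $C(P)$) $r(P)=1$, hence $a(P)=w+2n+2$. Writing $P=S\cup\tilde S$ with $S\subseteq[n]$ and $\tilde S$ among the fillers gives $a(P)=\alpha(S)+2|S|+2(n+1-|S|)=\alpha(S)+2n+2$, so $|S|$ cancels and $\alpha(S)=w$. The ``$+2$ shift plus weight-$2$ fillers'' trick makes the cardinality bookkeeping vanish; no pre-processing of SSP is needed.

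For the extreme-point case your approach is genuinely different from, and arguably cleaner than, the paper's. The paper adjoins $N=2n+2$ auxiliary variables $z$ together with the $N$ cycle constraints $Gz\le e_N$ and $N$ further coupling constraints $y_i+2z_1+2z_2+2z_3\le 3+\epsilon$, and then checks by enumerating the seven nontrivial packs of each coupling constraint that none of the resulting WIs separates $(y^*,z^*)=(\epsilon e_N,\tfrac12 e_N)$. Your path gadget with $z_1,\dots,z_{n-1}$ and the constraints $x_i+z_i\le1$, $x_{i+1}+z_i\le1$ achieves the same goal with fewer rows and a one-line WI check (each new row has $0/1$ coefficients and right-hand side $1$, so its only nontrivial WI is the row itself, tight at $(x^*,e-x^*)$). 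Two small points you should make explicit: first, the $2n-1$ tight constraints are linearly independent because subtracting the two constraints sharing $z_i$ gives $x_i=x_{i+1}$, whence all $x_i$ are equal and determined by $a^\transp x=b$; second, your objective $c$---a strictly positive integer combination of the tight gradients---indeed lies in $\Z_+^{2n-1}$ since $a\in\Z_+^n$ and the remaining gradients are $0/1$, so the instance conforms to the WI-SP input format.
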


\begin{proof}
First, we prove the first part of the statement.
We show that WI-SP is \NP-hard even in case of a single knapsack constraint. 
Given an instance $(\alpha, w) \in \Z^{n+1}_+$ of SSP with $w > \max(\alpha)$, we construct a knapsack problem $\max\{c^\transp x : a^\transp x \leq b, x \in \{0,1\}^{2n+2}\}$ and give an optimal solution $x^*$ to the associated LP relaxation.
The data $a,b,c$ of the constructed knapsack problem is defined as follows:
\begin{align}
\label{eq:wi_reduction}
\begin{split}
& a_i := \alpha_i + 2, \qquad \forall i = 1, \ldots, n,\\
& a_{n+1} := w \cdot (n+1) + 2(n+1)^2 - 3n - \alpha([n]), \\
& a_{n+1+j} := 2, \qquad \forall j = 1, \ldots, n+1,\\
& b := w + 2n + 3,\\
& c := a,\\
& x^*_1 := \ldots := x^*_{2n+2} :
= \frac{w + 2n + 3}{w \cdot (n+1) + 2n^2 + 5n + 4}.
\end{split}
\end{align} 

It is simple to check that $a,b,c$ are all integral, that $(a,b,c,x^*)$ has polynomial encoding size with respect to that of $(\alpha, w)$, and that $a^\transp x^* = b$.
Furthermore, $x^*$ is an optimal solution to the knapsack problem described by \eqref{eq:wi_reduction}, since $c^\transp x^* = a^\transp x^* = b$. Hence $(a,b,c,x^*)$ is a feasible input to WI-SP where $m=1$. 
Note that $(w \cdot (n+1) + 2n^2 + 5n + 4)/(w + 2n + 3) = n+1 + 1/(w+2n+3) \notin \Z$.
Hence, we can apply Lemma~\ref{lem:wi_sp} and obtain that there exists a separating WI for $x^*$ if and only if there exists a pack $P$ such that:
\begin{equation}
    \label{eq:equiv_main_wi}
    \begin{split}
        & r(P) > 0, \qquad P \cup C(P) = [2n+2], \\
        & |C(P)| = \left\lfloor \frac{w \cdot (n+1) + 2n^2 + 5n + 4}{w + 2n + 3} \right\rfloor = n+1.
    \end{split}
\end{equation}

\begin{claim}
\label{claim: wi_key}
There exists a WI from constraint $a^\transp x \leq b$ that separates $x^*$ if and only if there exists a subset $S \subseteq [n]$ such that $\alpha(S) = w$.
\end{claim}

\begin{cpf}
It suffices to show that there exists pack $P$ such that \eqref{eq:equiv_main_wi} holds if and only if there exists a subset $S \subseteq [n]$ such that $\alpha(S) = w$.

First, we assume that $P$ is a pack such that \eqref{eq:equiv_main_wi} holds. 
The two equations in \eqref{eq:equiv_main_wi} imply $|P| = 2n+2 - |C(P)|=n+1$.
If $\{n+2, n+3, \ldots, 2n+2\} \cap C(P) = \emptyset$, then $P \cup C(P) = [2n+2]$ implies that $\{n+2, n+3, \ldots, 2n+2\} \subseteq P$, which means $P = \{n+2, n+3, \ldots, 2n+2\}$ since $|P| = n+1$. 
However, since $w > \max(\alpha)$, we know that $2 + \max(\alpha) + 2(n+1) \leq  w + 2n + 3 = b$, which implies that $P \cup \{i'\}$ is a pack for any $i' \in [n]$, and this contradicts the assumption $C(P) = [2n+2] \setminus P$ of \eqref{eq:equiv_main_wi}. 
Therefore, there must exist some $i' \in \{n+2, n+3, \ldots, 2n+2\} \cap C(P)$. 
Hence $r(P) = b-a(P) < a_{i'} = 2$. 
Moreover, because $r(P)>0$, we have $r(P) = 1$, which implies $a(P) = b-1 = w+2n+2$. 
Since $a_{n+1} = w \cdot (n+1) + 2(n+1)^2 - 3n - \alpha([n]) \geq w + 2(n+1)^2 - 3n + (w \cdot n - \alpha([n])) > w+2n+2$, we know $n+1 \notin P.$ 
Let $S: = P \cap [n]$.
We then obtain $a(S) = 2 |S| + \alpha(S)$ and $a(P \setminus S) = 2(|P| - |S|) = 2(n+1 - |S|)$. 
Therefore, $w+2n+2 = a(P) = a(S) + a(P \setminus S) = \alpha(S) + 2n+2$, which gives us $\alpha(S) = w.$

Next, we assume that $S$ is a subset of $[n]$ with $\alpha(S) = w$. 
Clearly, $n+1 \notin S$. 
Then, we define the set $\tilde{S}$ containing $n+1 - |S|$ arbitrary indices from $\{n+2, \ldots, 2n+2\}$. 
Then $P: = S \cup \tilde{S}$ is a pack such that \eqref{eq:equiv_main_wi} holds.
In fact, we have
\begin{align*}
r(P) & = b-a(P) \\
& = w+2n+3 - a(S) - a(\tilde S) \\
& = w+2n+3 - (2|S| + \alpha(S)) - 2(n+1-|S|)\\
& = 1.
\end{align*}
This further implies $C(P) = [2n+2] \setminus P$ and $|C(P)| = 2n+2 - |P| = n+1$, since $a_i > 1$ for all $i \in [2n+2]$. Hence \eqref{eq:equiv_main_wi} is satisfied by pack $P$.
\end{cpf}

Claim~\ref{claim: wi_key} completes the proof of the first part of the statement, since SSP itself is \NP-hard. 

\smallskip

Next, we prove the second part of the statement.
We show that WI-SP is \NP-hard, even if $x^*$ is an extreme point solution to the LP relaxation \eqref{eq:knapsack_relaxation}.
Given an instance $(\alpha, w) \in \Z^{n+1}_+$ of SSP with $w > \max(\alpha)$, 
we construct an instance of the multi-dimensional knapsack problem 
$\max\{c^\transp x : A x \leq d, x \in \{0,1\}^{2N}\}$ and give an optimal solution $x^*$ to the associated LP relaxation, where $N=2n+2$. 
Let $G$ be a node-node adjacency matrix of a cycle on $N$ nodes.
The constraints of the constructed multi-dimensional knapsack problem are then defined as follows:
\begin{equation}
\label{eq:wisp_extreme}
    \begin{split}
        & a^\transp y \leq b, \qquad Gz \leq e_N, \\
        & y_i + 2z_1 + 2z_2 + 2z_3 \leq 3 + \epsilon, \qquad \forall i \in [N].
    \end{split}
\end{equation}
Here $(a,b) \in \Z_+^{N+1}$ is defined as in \eqref{eq:wi_reduction}, $\epsilon := (w + 2n + 3)/(w \cdot (n+1) + 2n^2 + 5n + 4)$ and $e_N$ is the $N$-dimensional vector with all components equal to one.
Now we define the objective vector $c: = (a, e_N)$, and we let $x^* = (y^*, z^*): = (\epsilon e_N, e_N/2)$. 
Note that we can multiply all the rows of \eqref{eq:wisp_extreme} by $w \cdot (n+1) + 2n^2 + 5n + 4$ to get an instance of WI-SP with integral data. 
The instance defined here clearly has polynomial encoding size with respect to that of $(\alpha, w)$. 

We now verify that this is a valid input for WI-SP. 
Clearly $x^*$ is feasible.
Furthermore, by summing all inequalities in $Gz \leq e_N$, it follows that $x^*$ is an optimal solution to the LP relaxation.

Next, we show that $x^*$ is an extreme point of the polyhedron given by \eqref{eq:wisp_extreme}. 
Since $N=2n+2$ is even, then $G$ is a square matrix with rank $N-1$. 
We can further verify that the first $2N$ constraints in \eqref{eq:wisp_extreme} give a system of $2N$ linearly independent constraints in $2N$ variables, and the only vector that satisfies all of them at equality is $x^*$. 

\begin{claim}
\label{claim: wi_2}
There exists a WI from \eqref{eq:wisp_extreme} that separates $x^*$ if and only if there exists a WI from the constraint $a^\transp y \leq b$ that separates $y^*$.
\end{claim}

\begin{cpf}
First, we assume that $P$ is a pack with respect to some constraint $a'^\transp x \leq b'$ of \eqref{eq:wisp_extreme} such that its corresponding WI separates $x^*$. 
If such constraint $a'^\transp x \leq b'$ comes from the subsystem $Gz \leq e_N$, say $z_1 + z_2 \leq 1$, then the only WI is $z_1 + z_2 \leq 1$, which cannot be violated by $x^*$ since $x^*$ is a feasible point. If $a'^\transp x \leq b'$ is $y_i + 2z_1 + 2z_2 + 2z_3 \leq 3 + \epsilon$ for some $i \in [N]$, then all the nonempty packs that do not include variables with zero coefficient are $\{i\}, \{i, N+1\}, \{i, N+2\}, \{i, N+3\}, \{N+1\}, \{N+2\}, \{N+3\}$. 
The corresponding WIs are $y_i \leq 1$ and:
\begin{align*}
y_i + 2z_1 + (2-\epsilon) (z_2 + z_3) & \leq 3, & 2z_1 + (1-\epsilon) (z_2 + z_3) & \leq 2, \\
y_i + 2z_2 + (2-\epsilon) (z_1 + z_3) & \leq 3, & 2z_2 + (1-\epsilon) (z_1 + z_3) & \leq 2, \\
y_i + 2z_3 + (2-\epsilon) (z_1 + z_3) & \leq 3, & 2z_3 + (1-\epsilon) (z_1 + z_2) & \leq 2.
\end{align*}
It is simple to check that none of the above inequalities is violated by $x^* = (\epsilon e_N, e_N/2)$. 
Hence the constraint $a'^\transp x \leq b'$ is just $a^\transp y \leq b$. 
In other words, we have shown that if  \eqref{eq:wisp_extreme} admits a separating WI that separates $x^*$, then the constraint $a^\transp y \leq b$ admits a separating WI that separates $y^*$. 

On the other hand, any WI from the constraint $a^\transp y \leq b$ is also a WI from the entire linear system \eqref{eq:wisp_extreme}. 
We have thereby proven this claim.
\end{cpf}

Note that $y^*=\epsilon e_N$ in this proof coincides with the $x^*$ in Claim~\ref{claim: wi_key}.
From Claim~\ref{claim: wi_2} and Claim~\ref{claim: wi_key}, 
we have completed the proof for the second part of the statement of this theorem, since SSP is \NP-hard. 
\qed
\end{proof}

Even though the problem WI-SP is \NP-hard in general, in the next theorem we provide a special case where it can be solved in  polynomial time, and a separating WI can be obtained in polynomial time, if one exists.

\begin{theorem}
\label{theo:poly_solv}
Let $x^*$ be the input solution to WI-SP. If $\max\{|S|: x^*_i \in (0,1) \ \forall i \in S, x^*(S) < 1\}$ is bounded by a constant, then a separating WI can be obtained in polynomial time, if one exists. 
\end{theorem}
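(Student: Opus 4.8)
The plan is to follow the strategy underlying Lemma~\ref{lem:wi_sp}, but to run the exchange/maximality analysis for a general input $x^*$ and then turn the resulting structure into an enumeration. Write $J:=\{i\in[n]:x^*_i\in(0,1)\}$ for the fractional support, fix a row $a^\transp x\le b$ of the relaxation, and set $\rho:=b-a^\transp x^*\ge 0$. Suppose a separating WI exists for this row, and let $P$ be a pack whose WI separates $x^*$ and which maximizes $f(P):=\sum_{i\in P}a_ix^*_i+\sum_{j\in C(P)}(a_j-r(P))x^*_j-a(P)$ over all packs. Reusing the telescoping expressions for $f(P)-f(P\cup\{i'\})$ and $f(P)-f(P\setminus\{i'\})$ from the proof of Lemma~\ref{lem:wi_sp}, together with $a^\transp x^*\le b$, I would first establish the following facts about a maximizer $P$: $r(P)>0$; $\sum_{j\in C(P)}x^*_j<1$ (so $C(P)$ contains no index with $x^*_j=1$); every index with $x^*_i=1$ lies in $P$; no index with $x^*_i=0$ and $a_i>0$ lies in $P$; hence $P=\{i:x^*_i=1\}\cup P_f$ with $P_f\subseteq J$. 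I would also record the ``cap--truncation'' identity $f(P)=r(P)-\rho-\sum_{j\in J\setminus P_f}\min(a_j,r(P))\,x^*_j$, obtained from $\sum_{j\in C(P)}(a_j-r(P))x^*_j=\sum_{j\notin P}a_jx^*_j-\sum_{j\notin P}\min(a_j,r(P))x^*_j$. Thus a separating WI from this row exists if and only if there is $P_f\subseteq J$ with $r(P):=b-a(\{i:x^*_i=1\})-a(P_f)\ge 1$ and $\sum_{j\in J\setminus P_f}\min(a_j,r(P))x^*_j+\rho<r(P)$.

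The hypothesis enters next. Put $C_f:=\{j\in J\setminus P_f:a_j>r(P)\}$, the fractional part of $C(P)$; since $\sum_{j\in C(P)}x^*_j<1$ we get $x^*(C_f)<1$, so by the hypothesis $|C_f|\le\alpha$, where $\alpha$ is the given constant. Maximality of $f(P)$ under single in/out swaps then yields a sandwich $\{i\in J\setminus C_f:x^*_i>\delta\}\subseteq P_f\subseteq\{i\in J\setminus C_f:x^*_i\ge\delta\}$ with $\delta:=1-x^*(C_f)>0$. Hence, once $C_f$ is known, $\delta$ is determined, the ``core'' $A_0:=\{i\in J\setminus C_f:x^*_i>\delta\}$ of $P_f$ is determined, and the only freedom left is which of the equivalued indices $E:=\{i\in J\setminus C_f:x^*_i=\delta\}$ to add to $P_f$. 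Substituting $x^*_i=\delta$ on $E$ and $a_j>r(P)$ on $C_f$ into the cap--truncation identity, and using that $C_f$ is exactly the fractional part of $C(P)$, collapses the violation condition to
\begin{equation*}
\delta\bigl(b-a(\{i:x^*_i=1\})-a(A_0)-a(E)\bigr)>\rho+\sum_{i\in J\setminus C_f:\,x^*_i<\delta}a_ix^*_i,
\end{equation*}
which does not involve the chosen subset of $E$; that subset only shifts $r(P)$, which must lie in the explicit interval $\bigl[\max(1,\max_{i\in J\setminus C_f,\,x^*_i<\delta}a_i),\ \min_{j\in C_f}a_j-1\bigr]$ in order for $C_f$ to really be the fractional part of $C(P)$.

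The algorithm then enumerates all $O(n^{\alpha})$ subsets $C_f\subseteq J$ with $|C_f|\le\alpha$; for each, it checks the residual-free inequality above, and, if it holds, it tests whether some $E'\subseteq E$ makes $r(P)=b-a(\{i:x^*_i=1\})-a(A_0)-a(E')$ land in the required interval while still forcing $C_f$ to be the fractional part of $C(P)$; in that case the pack $\{i:x^*_i=1\}\cup A_0\cup E'$ yields a separating WI, and one repeats over all $m$ rows. I expect the main obstacle to be twofold. First, proving the structural facts above cleanly for a general $x^*$ is delicate, since deleting an element of $P$ both raises $r(P)$ and reshuffles $C(P)$, so the telescoping bounds and the resulting sandwich must be argued carefully. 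Second, and more seriously, one must show that the final feasibility test is polynomial and not a genuine subset-sum over $E$ (which can be large): this is exactly where the sparsity hypothesis has to be used a second time — either to bound the number of equivalued indices one must branch on, or to exhibit enough structure in the admissible choices of $E'$ (a prefix/threshold form, found by sorting $E$ by weight and scanning the $O(n)$ candidate residuals) that the test reduces to polynomially many cases.
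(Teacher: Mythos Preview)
Your approach mirrors the paper's: the same single-element exchange analysis on a maximizer of $f$, leading to the same enumeration over small subsets of the fractional support. The cap--truncation identity and the observation that the violation test collapses to an $E'$-independent inequality are both correct.

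The obstacle you flag---a potential subset-sum over $E$---dissolves with a one-line tie-break that you are missing. The paper chooses $P'$ to be not merely a maximizer of $f$ but an \emph{inclusion-wise maximal} one among the maximizers. For any $i'\in E\setminus P'$ one has $i'\notin C(P')$ (since $E\subseteq J\setminus C_f$), so $P'\cup\{i'\}$ is still a pack, and your own lower telescoping bound gives
\[
f(P'\cup\{i'\})-f(P')\ \ge\ a_{i'}\bigl(x^*(C(P'))+x^*_{i'}-1\bigr)\ =\ a_{i'}\bigl(x^*(C_f)+\delta-1\bigr)\ =\ 0,
\]
contradicting inclusion-wise maximality. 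Hence $E'=E$ is forced, and there is nothing to search over. Equivalently, the paper never tries to make the enumerated set coincide with the true $C(P')\cap J$: it simply loops over $S\subseteq I_f$ with $x^*(S)<1$ (at most $O(n^\alpha)$ of them), sets $P'=\{i\in[n]\setminus S:x^*(S)+x^*_i\ge 1\}$, and checks directly whether $P'$ is a pack with $f(P')>0$. Any $P'$ passing this test is a separating WI regardless of whether $S$ happens to equal $C(P')\cap I_f$, and the tie-broken optimum is recovered when $S$ does equal it. This removes both your interval constraint on $r(P)$ and any need for a prefix/threshold argument on $E$. Your first worry (carrying the exchange bounds through for general $x^*$) is routine and is done verbatim in the paper.
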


\begin{proof}
We assume without loss of generality that $A x \leq d$ has only a single constraint $a^\transp x \leq b$, since we can always solve WI-SP with input $(A,d,c,x^*)$ by solving the corresponding WI-SP problems for each single constraint individually. 
For any $P \subseteq [n]$, let $f(P): = \sum_{i \in P}a_i x^*_i + \sum_{j \in C(P)} (a_j - r(P)) x^*_j - a(P).$ Then $f(P) > 0$ implies 
\begin{equation*}
    \label{eq:final}
\sum_{j \in C(P)} x^*_j < \frac{\sum_{i \in P \cup C(P)} a_i x^*_i - a(P)}{r(P)} \leq \frac{b - a(P)}{r(P)} = 1.
\end{equation*}
Among all the packs with the largest $f(P)$ value, let $P'$ be one that is inclusion-wise maximal.
In other words, $f(P') \geq f(P)$ for any pack $P$, and $f(P)=f(P')$ implies that $P'$ is not contained in $P$.
Let $C: = C(P')$. 
Note that for any $i' \in P'$ we have
\begin{align*}
f(P') - f(P' \setminus \{i'\}) & = a_{i'} \left(\sum_{j \in C(P' \setminus \{i'\})} x^*_j + x^*_{i'} - 1 \right) + \sum_{j \in C \setminus C(P' \setminus \{i'\})} \left(a_j - r(P')\right) x^*_j \\
& \in \left[a_{i'} \left( \sum_{j \in C(P' \setminus \{i'\})} x^*_j + x^*_{i'} - 1 \right), a_{i'} \left( \sum_{j \in C} x^*_j + x^*_{i'} - 1 \right)\right].
\end{align*}
Here, the last inequality $f(P') - f(P' \setminus \{i'\}) \leq a_{i'} \left( \sum_{j \in C} x^*_j + x^*_{i'} - 1 \right)$ holds because $a_j \leq a_{i'} + r(P')$ for any $j \in C \setminus C(P' \setminus \{i'\})$. 
Since $P' \setminus \{i'\}$ is also a pack and $f(P') \geq f(P' \setminus \{i'\})$, we have
\begin{equation}
\label{eq:final_1}
\sum_{j \in C} x^*_j + x^*_{i'} \geq 1, \quad \forall i' \in P'.    
\end{equation}
On the other hand, for any $i' \in [n] \setminus (C \cup P')$:
\begin{align*}
f(P' \cup \{i'\}) - f(P') & = a_{i'} \left(\sum_{j \in C} x^*_j + x^*_{i'} - 1 \right) + \sum_{j \in C(P' \cup \{i'\}) \setminus C} \left(a_j - r(P' \cup \{i'\})\right) x^*_j \\
& \in \left[a_{i'} \left( \sum_{j \in C} x^*_j + x^*_{i'} - 1 \right), a_{i'} \left( \sum_{j \in C(P' \cup \{i'\})} x^*_j + x^*_{i'} - 1 \right)\right].
\end{align*}
Since $i' \in [n] \setminus (C \cup P')$, the set $P' \cup \{i'\}$ is still a pack, hence $f(P' \cup \{i'\}) - f(P') \leq 0$. 
Furthermore, since $P'$ is an inclusion-wise maximal pack with the largest $f(P')$ value, we have $f(P' \cup \{i'\}) - f(P') < 0$.
Therefore, 
\begin{equation}
\label{eq:final_2}
\sum_{j \in C} x^*_j + x^*_{i'} < 1, \quad \forall i' \in [n] \setminus (C \cup P'). 
\end{equation}
From \eqref{eq:final_1} and \eqref{eq:final_2}, we obtain 
\begin{equation}
\label{eq:final_3}
P' = \{i \in [n] \setminus C : x^*(C) + x^*_i \geq 1\}.
\end{equation}

We have thereby shown that there exists a WI from knapsack constraint $a^\transp x \leq b$ which separates $x^*$, if and only if there exists $C \subseteq [n]$, such that the corresponding $P'$, as defined in \eqref{eq:final_3}, is a pack satisfying $f(P')>0$. 
Therefore, WI-SP can be solved by checking whether the set $P' = \{i \in [n] \setminus C : x^*(C) + x^*_i \geq 1\}$ is a pack with $f(P')>0$, for any possible $C \subseteq [n]$ with $x^*(C) < 1$. 

Let $I_0 := \{i \in [n] : x^*_i = 0\}$ and $I_f := \{i \in [n] : x^*_i \in (0,1)\}$. 
From the assumptions of this theorem, we know that $\alpha: = \max\{|S| : x^*(S) < 1, S \subseteq I_f\}$ is a constant. 
For any $T \subseteq I_0$ and $S \subseteq I_f$ with $x^*(S) < 1$, it is easy to see that
$$
\{i \in [n] \setminus S : x^*(S) + x^*_i \geq 1\} = \{i \in [n] \setminus (S \cup T) : x^*(S \cup T) + x^*_i \geq 1\}.
$$
Hence, $\{i \in [n] \setminus C : x^*(C) + x^*_i \geq 1\}$ is a pack with positive $f$ value for some $C \subseteq [n]$ with $x^*(C) < 1$, if and only if $\{i \in [n] \setminus (C \setminus I_0) : x^*(C \setminus I_0) + x^*_i \geq 1\}$ is a pack with positive value, where $C \setminus I_0 \subseteq I_f$ and $x^*(C \setminus I_0) = x^*(C) < 1$. 
Therefore, WI-SP can be solved by the following procedure: 
\begin{enumerate}
    \item For any $S \subseteq I_f$ with $x^*(S) < 1$, construct the corresponding $P' = \{i \in [n] \setminus S : x^*(S) + x^*_i \geq 1\}$.
    \item Check if $P'$ is a pack with $f(P') > 0$.
    \item If the answer to the previous check is yes for some $S \subseteq I_f$ with $x^*(S) < 1$, then the corresponding $P'$ provides a yes-certificate to WI-SP, and its corresponding WI separates $x^*$; If the answer is no for all $S \subseteq I_f$ with $x^*(S) < 1$, then $x^*$ cannot be separated by any WI from the knapsack constraint $a^\transp x \leq b$.
\end{enumerate}
Since $\alpha = \max\{|S| : x^*(S) < 1, S \subseteq I_f\}$, we have 
$$
|\{S : x^*(S) < 1, S \subseteq I_f\}| \leq \sum_{k=0}^\alpha \binom{n}{k} = O(n^\alpha).
$$
So this above procedure can be implemented in polynomial time, 
and we complete the proof.
\qed
\end{proof}

In particular, Theorem~\ref{theo:poly_solv} implies that, if $x^*$ has a constant number of fractional components, then WI-SP can be solved in polynomial time. 
We directly obtain the following corollary.

\begin{corollary}
\label{cor: 2}
If $m$ is a constant and $x^*$ is an extreme point solution to \eqref{eq:knapsack_relaxation}, then a separating WI can be obtained in polynomial time, if one exists. 
\end{corollary}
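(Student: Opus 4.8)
The plan is to obtain this as an immediate consequence of Theorem~\ref{theo:poly_solv}, by controlling the combinatorial quantity that appears in its hypothesis in terms of $m$. First I would invoke the standard linear programming fact that an extreme point $x^*$ of the polyhedron $\{x \in [0,1]^n : Ax \leq d\}$, where $A$ has $m$ rows, must satisfy $n$ linearly independent constraints at equality; at most $m$ of these can be rows of $Ax \leq d$, so at least $n-m$ of the tight constraints have the form $x_i = 0$ or $x_i = 1$. Hence the fractional support $\{i \in [n] : x^*_i \in (0,1)\}$ has cardinality at most $m$. This is the same observation already used in the proof of Corollary~\ref{cor: 1}.

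Next I would note that every set $S$ feasible for the maximization $\max\{|S|: x^*_i \in (0,1)\ \forall i \in S,\ x^*(S) < 1\}$ is, by definition, a subset of the fractional support of $x^*$, and therefore $|S| \leq m$. Consequently $\max\{|S|: x^*_i \in (0,1)\ \forall i \in S,\ x^*(S) < 1\} \leq m$, which is bounded by a constant since $m$ is a constant by hypothesis. Theorem~\ref{theo:poly_solv} then applies directly and produces a separating WI in polynomial time whenever one exists, which establishes the corollary.

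Since the argument is a one-line reduction to Theorem~\ref{theo:poly_solv}, I do not expect a genuine obstacle here; the only point that deserves explicit care is the classical bound on the number of fractional components of an extreme point of a box-constrained polyhedron with $m$ additional inequalities, and the trivial remark that the quantity in Theorem~\ref{theo:poly_solv} cannot exceed the size of the fractional support. Everything else is inherited verbatim from Theorem~\ref{theo:poly_solv}.
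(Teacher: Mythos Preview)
Your proposal is correct and mirrors the paper's own reasoning: the paper simply remarks that Theorem~\ref{theo:poly_solv} implies polynomial solvability whenever $x^*$ has a constant number of fractional components, and then states Corollary~\ref{cor: 2} as an immediate consequence. Your explicit justification that an extreme point of $\{x\in[0,1]^n:Ax\le d\}$ has at most $m$ fractional components, and hence the quantity in Theorem~\ref{theo:poly_solv} is at most $m$, is exactly the intended (and only) step.
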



\section*{Acknowledgments}

A.~Del Pia is partially funded by ONR grant N00014-19-1-2322. 
Any opinions, findings, and conclusions or recommendations expressed in this material are those of the authors and do not necessarily reflect the views of the Office of Naval Research.
The work of J.~Linderoth and H.~Zhu is supported by the Department of Energy, Office of Science, Office of Advanced Scientific Computing Research, Applied Mathematics program under Contract Number DE-AC02-06CH11347.

\bibliographystyle{plain}

\bibliography{cite}

\end{document}